\newtheorem{theorem*}{Theorem}
\newtheorem{corollary*}{Corollary}
\newtheorem{theorem}{Theorem}[section]
\newtheorem{lemma}[theorem]{Lemma}
\newtheorem{proposition}[theorem]{Proposition}
\theoremstyle{definition}
\newtheorem{definition}[theorem]{Definition}
\newtheorem{example}[theorem]{Example}
\newtheorem{remark}[theorem]{Remark}
\numberwithin{equation}{section}
\theoremstyle{plain}
\newtheorem{corollary}[theorem]{Corollary}
\numberwithin{equation}{section}
\newcommand{\Mod}{\mathrm{Mod}}
\newcommand{\Teich}{\mathrm{Teich}}
\newcommand{\inj}{\mathrm{inj}}
\newcommand{\sys}{\mathrm{sys}}
\newcommand{\Fix}{\mathrm{Fix}}
\newcommand{\C}{\mathcal{C}}
\newcommand{\p}{\mathcal{P}}
\newcommand{\N}{\mathcal{N}}
\newcommand{\U}{\mathcal{U}}
\renewcommand{\O}{\mathcal{O}}
\DeclareMathOperator{\lcm}{lcm}
\renewcommand{\l}{\operatorname{{\llbracket}}}
\renewcommand{\r}{\operatorname{{\rrbracket}}}
\newcommand{\lp}{\operatorname{{\llparenthesis}}}
\newcommand{\rp}{\operatorname{{\rrparenthesis}}}
\begin{document}

\title[Geometric realizations of cyclic actions on surfaces - II]{Geometric realizations of \\ cyclic actions on surfaces - II}

\author{Atreyee Bhattacharya}
\address{Department of Mathematics\\
Indian Institute of Science Education and Research Bhopal\\
Bhopal Bypass Road, Bhauri \\
Bhopal 462 066, Madhya Pradesh\\
India}
\email{atreyee@iiserb.ac.in}
\urladdr{https://sites.google.com/iiserb.ac.in/homepage-atreyee-bhattacharya/home?authuser=1}

\author{Shiv Parsad}
\address{School of Mathematics and Computer Science\\
Indian Institute of Technology Goa\\
Goa College of Engineering Campus\\
Farmagudi, Ponda-403401, Goa\\
India}
\email{shiv@iitgoa.ac.in}

\author{Kashyap Rajeevsarathy}
\address{Department of Mathematics\\
Indian Institute of Science Education and Research Bhopal\\
Bhopal Bypass Road, Bhauri \\
Bhopal 462 066, Madhya Pradesh\\
India}
\email{kashyap@iiserb.ac.in}
\urladdr{https://home.iiserb.ac.in/$_{\widetilde{\phantom{n}}}$kashyap/}

\subjclass[2020]{Primary 57K20; Secondary 57M60}

\keywords{surface, mapping class, finite order maps, Teichm{\"u}ller space}

\maketitle

\begin{abstract}
 Let $ \Mod(S_g)$ denote the mapping class group of the closed orientable surface $S_g$ of genus $g\geq 2$. Given a finite subgroup $H$ of $\Mod(S_g)$, let $\Fix(H)$ denote the set of fixed points induced by the action of $H$ on the Teichm\"{u}ller space $\Teich(S_g)$.  When $H$ is cyclic with $|H| \geq 3$, we show that $\Fix(H)$ admits a decomposition as a product of two-dimensional strips at least one of which is of bounded width.  For an arbitrary $H$ with at least one generator of order $\geq 3$, we derive a computable optimal upper bound for the restriction $\sys : \Fix(H) \to \mathbb{R}^+$ of the systole function. Furthermore, we show that in such a case, $\Fix(H)$ is not symplectomorphic to the Euclidean space of the same dimension. Finally, we apply our theory to recover three well known results, namely: (a) Harvey's result giving the dimension of $\Fix(H)$, (b) Gilman's result that $H$ is irreducible if and only if the corresponding orbifold is a sphere with three cone points, and (c) the Nielsen realization theorem for cyclic groups.
\end{abstract}

\section{Introduction}
\label{sec:intro}
Let $S_g$ be a closed orientable surface of genus $g\geq 2$ and, let $\text{Mod}(S_g)$ denote the mapping class group of $S_g$.  Given a finite subgroup $H \leq \Mod(S_g)$, let $\Fix(H)$ denote the set of fixed points induced by the natural action of $H$ on the Teichm\"{u}ller space $\Teich(S_g)$. The Nielsen realization problem asks whether $\Fix(H) \neq \emptyset$, for an arbitrary finite subgroup $H \leq \Mod(S_g)$. While Kerckhoff~\cite{SK1} answered this in the affirmative,  Harvey~\cite[Theorem 2]{H1} showed that $\Fix(H) \approx \hat{i} (\Teich(S_g/H))$, where $\Teich(S_g/H)$ is defined in the sense of Bers~\cite{LB}, and $\hat{i}$ is the natural embedding induced by the branched cover $S_g \to S_g/H$ (identifying $H$ with a group of self-homeomorphisms of $S_g$). 

Recently, the second and third authors~\cite{PRS} proved a structure theorem for realizing a finite order mapping class as an isometry of a hyperbolic structure on $S_g$. This package had two parts: 
\begin{enumerate}[(1)]
\item realizing an irreducible periodic mapping class (whose Nielsen representative has a fixed point) called a \textit{spherical Type 1 mapping class}, and
\item a complete description of how all reducible \textit{non-rotational} (i.e. that are not realized as surface rotations) periodic mapping classes are built (or realized) inductively from spherical Type 1 mapping classes.
\end{enumerate}
\noindent  In particular, it was shown that a spherical Type 1 mapping class can be realized as the rotation of a polygon with a suitable side-pairing, as illustrated in Figure~\ref{fig:c14-action} below. 
\begin{figure}[htbp]
\labellist
\small
\pinlabel $g$ at 120 36
\pinlabel $b$ at 280 36
\pinlabel $a$ at 200 21
\pinlabel $c$ at 345 93
\pinlabel $f$ at 58 93
\pinlabel $e$ at 29 160
\pinlabel $d$ at 375 162
\pinlabel $d$ at 26 237
\pinlabel $e$ at 377 237
\pinlabel $a$ at 200 378
\pinlabel $g$ at 280 365
\pinlabel $b$ at 123 365
\pinlabel $f$ at 345 310
\pinlabel $c$ at 59 310
\pinlabel $\frac{2\pi}{14}$ at 235 205
\endlabellist
\centering
\includegraphics[width = 25 ex]{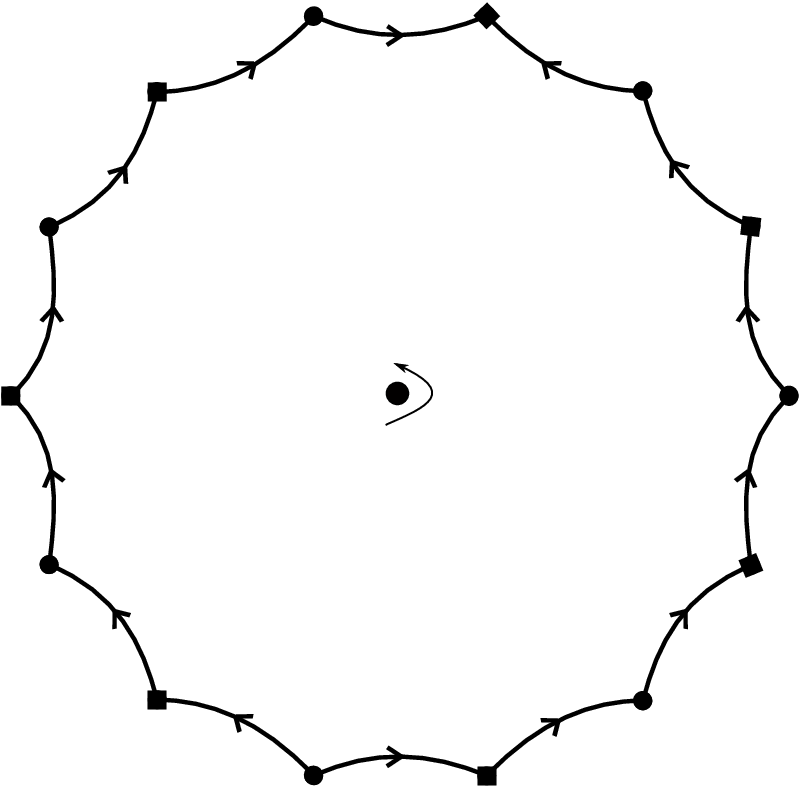}
\caption{An irreducible order $14$ element in $\Mod(S_3)$.}
\label{fig:c14-action}
\end{figure}
The inductive process of building a reducible non-rotational periodic mapping class involved two key constructions: 
\begin{itemize}
\item $\alpha$-\textit{compatibility} - Pasting together a pair of spherical Type 1 actions across boundary components induced by deleting invariant cyclically permuted disks around points in a pair of compatible orbits of size $\alpha$, which are orbits where the induced local rotation angles are equal, as illustrated in Figure~\ref{fig:2-compatibility} below. 

\begin{figure}[htbp]
\centering
\begin{subfigure}{.5\textwidth}
\labellist
\small
\pinlabel $\frac{2\pi}{6}$ at 120 90
\endlabellist
  \centering
  \includegraphics[width=.5\linewidth]{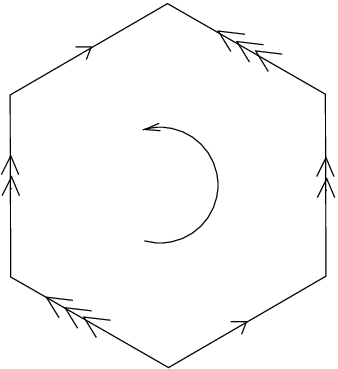}
  \caption{\tiny An irreducible order $6$ element $h  \in \Mod(S_1)$.}
  \label{fig:sub1}
\end{subfigure}%
\begin{subfigure}{.5\textwidth}
\labellist
\small
\pinlabel $\frac{2\pi}{6}$ at 51 22
\pinlabel $\frac{10\pi}{6}$ at 97 22
\endlabellist
  \centering
  \includegraphics[width=.8\linewidth]{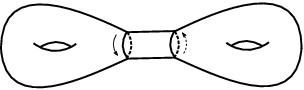}
  \caption{\tiny A $1$-compatibility between $h$ and $h^5$.}
  \label{fig:sub2}
\end{subfigure}
\caption{A $1$-compatibility realizing a order $6$ element in $\Mod(S_2)$. }
\label{fig:2-compatibility}
\end{figure}

\noindent This notion also includes the compatibility across orbits of size $\alpha$ within the same surface, called a \textit{self $\alpha$-compatibility}.
\item \textit{Addition (resp. deletion) of a $g_1$-permutation component} - For $g_1 \geq 1$, pasting (resp. removal) of a cyclical permutation of $n$ copies of $S_{g_1,1}$ to (resp. from) the action induced by $h$ on $S_{g,n}$ by removing invariant cyclically permuted disks around an orbit of size $n$, as shown in Figure~\ref{fig:g1-perm} below. 
\begin{figure}[htbp]
\labellist
\small
\pinlabel $\frac{2\pi}{6}$ at 130 95
\endlabellist
\centering
\includegraphics[width = 20 ex]{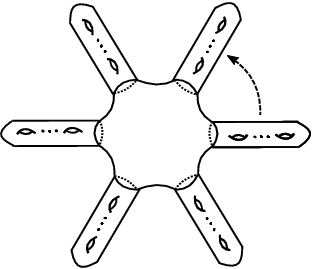}
\caption{\small Addition of a $g_1$-permutation component to a sphere rotation by $2\pi/6$.}
\label{fig:g1-perm}
\end{figure}
\end{itemize}

Given any cyclic subgroup $H \leq \Mod(S_g)$ of order $n$, the Nielsen-Kerckhoff theorem says that one may also regard $H$ as a cyclic subgroup of $\text{Homeo}^+(S_g)$ generated by an orientation-preserving map $h$ of order $n$. Throughout this paper, we will refer to both the mapping class represented by $h$, and the group it generates, interchangeably, as a $C_n$-action on $S_g$. Furthermore, $h$ has a \textit{corresponding orbifold} given by $\O_h \approx S_{g}/\langle h \rangle$. In Section~\ref{sec:cyclic_irreducibles}, we extend the results in~\cite{PRS} to surface rotations of order $n \geq 3$ (see Theorem~\ref{thm:arb_cyc_real}). We also prove that an arbitrary $C_n$-action ($n \geq 3$) $h$ admits a canonical decomposition that can be visualized as a \textit{necklace with beads} where the beads represent spherical Type 1 actions and the strings correspond to admissible compatibilities among those beads (see Section 3 for details). 

Each invariant curve orbit arising in an $\alpha$-compatibility (in a canonical decomposition of $h$) contributes one length and one twist parameter to $\Fix(\langle h \rangle )$, while the addition of each $g_1$-permutation component contributes $\dim(\Teich(S_{g_1,1}))+1$ Fenchel-Nielsen coordinates. Moreover, each length parameter contributed by an $\alpha$-compatibility is bounded above by a positive constant determined by $h$. Thus, a canonical decomposition of $h$ yields a canonical decomposition of $\Fix(H)$ as a product of two-dimensional strips (subspaces) at least one of which is of bounded width. (For details, we refer the reader to Section~\ref{sec:main}.) 


\begin{theorem*}
\label{intro:main1}
Let $H <\Mod(S_g)$ be a cyclic subgroup with $|H| \geq 3$. Suppose that $H$ decomposes into $k$ spherical Type 1 components that were pasted across $p$ many $\alpha$-compatibilities (where $\alpha<n$), $q$ self $\alpha$-compatibilities, and with the addition of a $g_1$-permutation component and the deletion of a $g_2$-permutation component (where $g_2 \leq g_1+q$). Then
$$\Fix(H) \approx \left( \prod_{j=1}^{N}((0,\ell_{j}(H))  \times \mathbb{R}) \right) \times \left( \mathbb{R}_+ \times \mathbb{R}\right )^{2(g_1-g_2)-1},$$
\noindent where $N= 3k+g_1-g_2-2p+q-4 >0$, $\ell_{j}(H) <\infty$ is a positive constant determined by $H$ such that the factors involving $(0, \ell_{j}(H))$ and $\mathbb{R}_+$ are length parameters arising from $\alpha$-compatibilities and the addition (or deletion) of permutation components, respectively, and the remaining factors in each summand being the corresponding twist parameters arising from these constructions (with the implicit understanding that when $g_1-g_2$ is zero, then the last term will disappear). 
\end{theorem*}
\noindent It is important to note that in general $g_2 >0$ in Theorem~\ref{intro:main1}, as the deletion of permutation components are necessary to build arbitrary finite-order mapping classes (see Example~\ref{eg: irr_type2_necklace}).

Let $\sys : \Teich(S_g) \to \mathbb{R}^+$ be the systole function. The systole function has been extensively studied. Using a variational approach initiated by Schmutz~\cite{PS2}, Akrout~\cite{HA1} proved that $\sys$ is a topological Morse function. Moreover, its critical points, and in particular, its local maxima have been characterized~\cite{BC0,PS2}. It is easy to see that by an immediate application of the Gauss-Bonnet Theorem for closed hyperbolic surfaces, the restriction $\sys : \Fix(H) \to \mathbb{R}^+$ is bounded above. When $H= \langle h \rangle$, better bounds for $\sys$ are known~\cite{BBCT}, and for the specific case when $\O_h$ is a triangular surface, a method of explicitly computing $\sys$ was described in~\cite{HK}. Applying Theorem~\ref{intro:main1}, in Section~\ref{sec:injrad-Fix}, we derive an easily computable upper bound for $\sys$ which turn out to be better than the bounds provided by~\cite{BBCT} in many cases. 

\begin{theorem*}
\label{intro:inj_bd}
Let $H = \langle h_1,\ldots h_s \rangle$ be a finite subgroup of $\Mod(S_g)$ such that $|h_i| \geq 3$ for some $1 \leq i \leq s$. Then the restriction $\sys : \Fix(H) \to \mathbb{R}^+$ of the systole functional is bounded above by a global constant $\U(H)$ that is determined by the injectivity radii of points in the compatible orbits under a suitable decomposition of the $h_i$s into spherical Type 1 components as in the hypothesis of Theorem~\ref{intro:main1}. Consequently, the injectivity radii of the structures in $\Fix(H)$ are bounded by $\frac{1}{2}\U(H)$.
\end{theorem*}

\noindent The upper bound $\U(H)$ in Theorem~\ref{intro:inj_bd} is optimal in the sense that for each $\epsilon>0,$ there exist a point $P\in S_{g}$ and a structure $X \in \Fix(H)$ such that the injectivity radius $\inj_P(X)$ at the point $P$ is $>\U(H)-\epsilon$ (see Example~\ref{eg:inj_bd_real1}). 

Let $\mathfrak{g}_{WP}$ be the Weil-Petersson metric on $\Teich(S_g)$. By viewing $\Fix(H)$ as a K\"{a}hler submanifold of $(\Teich(S_g),\mathfrak{g}_{WP})$~\cite{SK2,MW1},  and appealing to the symplectic non-squeezing theorem due to Gromov~\cite{MG} from symplectic geometry, we conclude the following.

\begin{corollary*}
Given a finite subgroup $H < \Mod(S_g)$ with at least one generator of order $\geq 3$, $\Fix(H)$ is not symplectomorphic to the Euclidean space of the same dimension.
\end{corollary*}

Finally in Section \ref{sec:classical_results}, taking inspiration from the theory developed in this paper and~\cite{PRS}, we recover some classical results. In particular, we provide alternative proofs for the following well known results due to Harvey~\cite{H1,HM1} and Gilman~\cite{JG3}.  
\begin{corollary*}
 Let $H = \langle h \rangle$ be an arbitrary $C_n$-action on $S_g$ such that $\O_h$ has $c$ cone points. Then:
\begin{enumerate}[(i)]
\item (Harvey) $\dim(\Fix(H)) = 6g_0(h) + 2c - 6$, and
\item (Gilman) $h$ is irreducible if, and only if $g_0(h) = 0$ and $c=3$.
\end{enumerate}
\end{corollary*}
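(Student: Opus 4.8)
The plan is to deduce both statements directly from the explicit homeomorphism type $\Fix(H)\approx M_1/M_2$ supplied by the Main Result, so that each part reduces to a dimension count together with a translation of the combinatorial data of the decomposition of $h$ into the orbifold invariants $g_0(h)$ and the number of cone points. To keep the notation unambiguous I will write $c$ for the number of cone points, as in the corollary, and temporarily denote the number of toral additions and toral subtractions of the Main Result by $t_+$ and $t_-$.

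For part (i) I would first compute $\dim\Fix(H)=\dim M_1-\dim M_2$ from the product factorizations. The factor $\prod_{i=1}^{k}\{[\p_{h_i}]\}$ is a finite set of points and is therefore $0$-dimensional; each factor $(0,\ell]\times\R$ is $2$-dimensional; and the Euclidean factors $\R_+^{2t_+-1}\times\R^{2t_+-1}$ and $\R_+^{2t_--1}\times\R^{2t_--1}$ contribute their evident dimensions (using the stated convention that these vanish when $t_+$, respectively $t_-$, is zero). This expresses $\dim\Fix(H)$ as an explicit linear function of $k,a',b,t_+,t_-$. The second, and geometrically substantive, step is to record how these parameters build $\mathcal{O}_h$. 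In the necklace-with-beads picture the $k$ spherical Type 1 beads are spheres, each carrying three cone points; the $a'$ pairwise $r$-compatibilities with $r<n$ together with the $k-a'-1$ $n$-compatibilities furnish exactly $k-1$ inter-bead identifications, i.e.\ a spanning tree, contributing no genus, while each self $r$-compatibility and each toral addition attaches a handle and each toral subtraction removes one. On the level of cone points, an $r$-compatibility with $r<n$ (whether between distinct beads or a self-compatibility) identifies and deletes a pair of cone points of order $n/r$, whereas the $n$-compatibilities and the toral operations are performed along free orbits and so leave the cone points unchanged. Packaging this bookkeeping — most cleanly through the additivity of the orbifold Euler characteristic under the cutting-and-pasting operations — should yield the two identities
\[
g_0(h)=b+t_+-t_-,\qquad c=3k-2a'-2b.
\]
Substituting these into the dimension expression from the first step collapses it to $6g_0(h)+2c-6$, which is Harvey's formula.

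I expect this Euler-characteristic bookkeeping to be the main obstacle, since it is the only place where the geometry of the decomposition is genuinely used: one must verify that each compatibility is counted exactly once, that the inter-bead identifications really do assemble into a tree (no genus), and that the handle and cone-point effects of the self-compatibilities and toral operations are as claimed. A few small sample decompositions (a single bead; two beads glued by one $r$-compatibility) can be used as consistency checks on both sides of the two identities before the general substitution is carried out.

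For part (ii) I would argue that irreducibility is detected by the quotient orbifold alone. Indeed, $h$ is reducible precisely when it preserves an essential multicurve, and such a multicurve descends to (and conversely lifts from) an essential nonperipheral simple closed curve on $\mathcal{O}_h$; since $g\ge 2$ forces $\mathcal{O}_h$ to be a closed hyperbolic $2$-orbifold, the only such orbifold carrying no essential nonperipheral curve is the sphere with three cone points, i.e.\ $g_0(h)=0$ and $c=3$. In the language of the decomposition this is exactly the case of a single spherical Type 1 bead, whose realization is rigid and hence irreducible by Proposition~\ref{thm:irr_type1}; any nontrivial compatibility instead produces an $h$-invariant gluing curve and thus a reduction. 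This is also visible through part (i): an irreducible $h$ is rigid, so $\dim\Fix(H)=0$, whence $3g_0(h)+c=3$, whose only nonnegative integer solutions are $(g_0(h),c)=(0,3)$ and $(1,0)$; the second is a free action with torus quotient, which is reducible, leaving $(0,3)$. Conversely, when $g_0(h)=0$ and $c=3$ the action is quasiplatonic (spherical Type 1), so Proposition~\ref{thm:irr_type1} gives irreducibility, completing the characterization.
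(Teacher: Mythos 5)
Your proposal is correct, and for part (i) it is essentially the paper's own proof. The paper (Corollary~\ref{cor:dim-branch-loci}) deduces Harvey's formula from Theorem~\ref{thm: main} by exactly your two translation identities: in necklace notation, $g_0(h)=g'-g''+m$ and $c=k+2f(T_{\N})-2m+2$, which coincide with your $g_0=b+t_+-t_-$ and $c=3k-2a'-2b$ under the dictionary $m=b$, $g'=t_+$, $g''=t_-$, $f(T_{\N})=k-a'-1$. One caveat about your first step: the paper substitutes these identities into the dimension formula $6(g'-g'')+2k+4f(T_{\N})+2m-2$ \emph{stated} in Theorem~\ref{thm: main}, not into a count read off from the displayed products. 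If you literally compute $\dim M_1-\dim M_2$ from the product factorization of the Main Result, the answer does not collapse to $6g_0(h)+2c-6$: already for two beads joined by a single $r$-compatibility ($k=2$, $a'=1$, $b=t_\pm=0$) the displayed formula yields zero interval factors, whereas Corollary~\ref{cor:comp_pair}(i) yields one, i.e.\ dimension $2$, which is what Harvey's formula gives. This off-by-one in the product count is an inconsistency internal to the paper (its separately stated dimension formula is the consistent one); your bookkeeping identities are exactly right, but the ``collapse'' you assert would fail by $2$ if carried out verbatim on the displayed products, so in a final write-up you should quote the dimension formula rather than re-derive it from the factors.

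For part (ii), your second argument (``also visible through part (i)'') is the paper's proof of Corollary~\ref{cor:gilman}: irreducibility forces $\dim\Fix(H)=0$ by Remark~\ref{rem:fixD}, and Harvey then pins down $(g_0,c)$ --- the paper eliminates the spurious solution by first proving $g_0=0$ directly (a nonzero quotient genus gives a permutation component, hence a reduction), while you eliminate $(g_0,c)=(1,0)$ by noting it forces $g=1$; both work. Conversely, $(0,3)$ gives $\dim\Fix(H)=0$ and hence irreducibility by Remark~\ref{rem:fixD}, again as in the paper. Your first argument --- descending and lifting essential curves between $S_g$ and $\mathcal{O}_h$, plus the fact that the sphere with three cone points is the only closed hyperbolic $2$-orbifold carrying no essential simple closed curve --- is a genuinely different and more classical route (close to Gilman's original). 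It bypasses Theorem~\ref{thm: main} entirely, but it requires the standard equivariant-isotopy facts needed to make the correspondence between reduction systems upstairs and curves on the orbifold precise; the paper's route avoids invoking those facts by staying inside its Teichm\"uller-theoretic framework.
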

\noindent We conclude the paper by sketching a purely topological proof of the Nielsen realization theorem for cyclic groups~\cite{WF2,WF1,SK1,JN1}, which asserts that every finite-order element in $\Mod(S_g)$ has a representative in $\text{Homeo}^+(S_g)$ of the same order. 

\section{Preliminaries}\label{prel}
Let $H=\langle h\rangle \leq \Mod(S_g)$ be a cyclic subgroup of order $n$. Throughout this section we will fix an order $n$ representative of $h$ in $\text{Homeo}^{+}(S_g)$ i.e. a $C_n$-action on $S_g$.  

A $C_n$-action $D$ on $S_g$ induces a branched covering $S_g \to S_g/C_n,$ where the quotient orbifold $\O_D : =S_g/C_n$ has signature $(g_0;\,n_1,\dots,n_{\ell})$ (see~\cite{FM,WT}). From orbifold covering space theory, we obtain an exact sequence
$$ 1 \to \pi_1(S_g) \to \pi^{\text{orb}}_1(\O_D) \stackrel{\rho}{\to} C_n \to 1,$$ where $\pi^{\text{orb}}_1(\O_D)$ is a Fuchsian group~\cite{SK} given by the presentation
\begin{gather*}
\langle \alpha_1,\dots,\alpha_{\ell}, x_1,y_1,\dots,x_{g_0},y_{g_0}\,|\,
   \alpha_1^{n_1}=\cdots =\alpha_{\ell}^{n_l}=1,\, \prod_{i=1}^{\ell}\alpha_i=\prod_{j=1}^{g_0}[x_j,y_j]\,\rangle.
\end{gather*}
The epimorphism $\pi^{\text{orb}}_1(\O_D)\xrightarrow{\rho}C_n$ is called the surface kernel map~\cite{WH}, and has the form $\rho(\alpha_i) = t^{(n/n_i)c_i}$, for $1 \leq i \leq \ell$, where $C_n = \langle t \rangle$ and $\gcd(c_i,n_i) = 1$. The map $\rho$ is often described by a $(g_0;\,n_1,\dots,n_{\ell})$-generating vector~\cite{SAB,JG4}.  From a geometric viewpoint, a cone point of order $n_i$ lifts to an orbit of size $n/n_i$ on $S_g$, and the local rotation induced by $D$ around the points in the orbit is given by $2 \pi c_i^{-1}/n_i$, where $c_i c_i^{-1} \equiv 1 \pmod{n_i}$.  (For more details on the theory of finite group actions on surfaces, we refer the reader to~\cite{TB,BA1,HZ}.) 

Putting together the notions of orbifold signature and the generating vector, we can obtain a combinatorial encoding of the conjugacy class of a cyclic action. 

\begin{definition}\label{defn:data_set}
A \textit{data set of degree $n$} is a tuple
$$
D = (n,g_0, r; (c_1,n_1), (c_2,n_2),\ldots, (c_{\ell},n_{\ell})),
$$
where $n\geq 1$, $ g_0 \geq 0$, and $0 \leq r \leq n-1$ are integers, and each $c_i$ is a residue class modulo $n_i$ such that:
\begin{enumerate}[(i)]
\item $r > 0$ if, and only if $\ell = 0$, and when $r >0$, we have $\gcd(r,n) = 1$, 
\item each $n_i\mid n$,
\item for each $i$, $\gcd(c_i,n_i) = 1$, 
\item for each $i$, $\lcm(n_1,\ldots \widehat{n_i}, \ldots,n_{\ell}) = \lcm(n_1,\ldots,n_{\ell})$, and $\lcm(n_1,\ldots,n_{\ell}) = n$, if $g_0 = 0$,  and
\item $\displaystyle \sum_{j=1}^{\ell} \frac{n}{n_j}c_j \equiv 0\pmod{n}$.
\end{enumerate}
The number $g$ determined by the Riemann-Hurwitz equation
\begin{equation*}\label{eqn:riemann_hurwitz}
\frac{2-2g}{n} = 2-2g_0 + \sum_{j=1}^{\ell} \left(\frac{1}{n_j} - 1 \right) 
\end{equation*}
is called the \emph{genus} of the data set, which we shall denote by $g(D)$. Given a data set $D$ as above, we define $$n(D) : = n, \, g(D) := g, \, r(D) = r, \text{ and }g_0(D) := g_0.$$
The quantity $r(D)$ associated with a data set $D$ will be non-zero if, and only if, $D$ represents a free rotation of $S_{g(D)}$ by $2\pi r(D)/n$. 
\end{definition}

 \noindent The following lemma is a consequence of the classical results in~\cite{WH,JN2}. (For more details, see \cite{ALE,km,KP}.)

\begin{lemma}\label{prop:ds-action}
Data sets of degree $n$ and genus $g$ correspond to the conjugacy classes of $C_n$-actions on $S_g$. 
\end{lemma}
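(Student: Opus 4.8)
The plan is to exhibit mutually inverse maps between the two collections by exploiting the exact sequence
\[1 \to \pi_1(S_g) \to \pi_1^{\text{orb}}(\mathcal{O}_D) \xrightarrow{\rho} C_n \to 1\]
and then invoking the classical correspondence between surface kernel epimorphisms and conjugacy classes. First I would set up the forward map. Given a $C_n$-action on $S_g$, I form the quotient orbifold, whose signature $(g_0;\,n_1,\dots,n_{\ell})$ together with the surface kernel map $\rho$, normalized as $\rho(\alpha_i) = t^{(n/n_i)c_i}$, produces a candidate tuple $D$. I would verify conditions (ii)--(v) of Definition~\ref{defn:data_set} directly from the properties of $\rho$: condition (ii) holds because each $n_i$ is the order of a point stabilizer and hence divides $n$; for (iii), the computation $\gcd(n,(n/n_i)c_i) = (n/n_i)\gcd(n_i,c_i)$ shows that $\rho(\alpha_i)$ has order exactly $n_i$ precisely when $\gcd(c_i,n_i)=1$, which is forced since $\rho$ must be injective on each $\langle \alpha_i \rangle$ for $\ker\rho$ to be torsion-free; and condition (v) follows by applying $\rho$ to the long relation $\prod_i \alpha_i = \prod_j [x_j,y_j]$ and using that $C_n$ is abelian, so the commutators vanish and $t^{\sum_j (n/n_j)c_j} = 1$.

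For condition (iv), the second part ($\lcm(n_1,\dots,n_{\ell}) = n$ when $g_0=0$) is exactly the surjectivity of $\rho$ in the absence of handle generators, since the images $\rho(\alpha_i)$ generate the cyclic subgroup of order $\lcm(n_1,\dots,n_{\ell})$. The first part drops out of (v): as $\rho(\alpha_i) = \prod_{j\neq i}\rho(\alpha_j)^{-1}$ lies in the subgroup generated by the remaining images, deleting the $i$-th factor does not shrink the generated subgroup, whose order is the stated lcm. The free case $\ell=0$ yields condition (i), with $r$ recording the rotation number and $\gcd(r,n)=1$ needed for $\rho$ to be onto. Finally, the Riemann--Hurwitz equation pins the genus of $D$ to $g(D)=g$.

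Next I would construct the inverse. Given a data set $D$, I take a Fuchsian group $\Delta$ of signature $(g_0;\,n_1,\dots,n_{\ell})$ and define $\rho\colon \Delta \to C_n$ by $\rho(\alpha_i) = t^{(n/n_i)c_i}$, choosing the images of the handle generators to be generators of $C_n$ when $g_0 > 0$ (when $g_0 = 0$, surjectivity is automatic from (iv)). The torsion relations are respected since $\rho(\alpha_i)^{n_i} = t^{nc_i} = 1$, and the long relation is respected by (v) together with abelianness, so $\rho$ is a well-defined epimorphism. The crucial point is that $\ker\rho$ is torsion-free: every elliptic element of $\Delta$ is conjugate into some $\langle \alpha_i \rangle$, and by (iii) the element $\rho(\alpha_i)$ has order exactly $n_i$, so $\rho$ is injective on $\langle \alpha_i \rangle$ and no nontrivial power of any $\alpha_i$ lies in the kernel. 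Hence $\ker\rho \cong \pi_1(S_{g(D)})$ by Riemann--Hurwitz, and $C_n = \Delta/\ker\rho$ acts on $S_{g(D)}$ by deck transformations.

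It then remains to check that these assignments descend to conjugacy classes and are mutually inverse. Conjugate actions have homeomorphic quotient orbifolds and identical local rotation data (orientation-preserving conjugacy preserves the residues $c_i$), hence the same data set; conversely, the construction recovers the deck action up to topological conjugacy independently of the auxiliary choice of handle-generator images and of the realizing Fuchsian group. The two genuinely classical inputs, which I would cite rather than reprove, are the realizability of every admissible surface kernel epimorphism by an action (Harvey~\cite{WH}) and the completeness of the rotation data as a conjugacy invariant for cyclic actions (Nielsen~\cite{JN2}). I expect the last point --- that two actions sharing a data set are genuinely topologically conjugate, rather than merely giving rise to isomorphic short exact sequences --- to be the only real obstacle; it is precisely here that the cyclic hypothesis is essential and where the appeal to \cite{JN2} is indispensable.
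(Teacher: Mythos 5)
Your proposal is correct and takes essentially the same approach as the paper, which in fact offers no proof of this lemma beyond recording it as a consequence of the classical results of Harvey~\cite{WH} and Nielsen~\cite{JN2}. Your argument is exactly the standard expansion of that citation --- deriving conditions (i)--(v) from the surface kernel map, reversing the construction via a Fuchsian group of the given signature with torsion-free kernel, and deferring to Harvey for realizability and to Nielsen for the completeness of the rotation data as a conjugacy invariant --- which matches the paper's intended justification at every point.
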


\noindent From here on, we will follow the nomenclature of data sets to describe $C_n$-actions on $S_g$. In the remainder of this section, we will summarize the theory developed in~\cite{PRS} to be used extensively throughout this paper. 

\noindent To begin with, we classify $C_n$-actions on $S_g$ into three broad categories. 

\begin{definition}\label{def:types_of_actions}
Let $D$ be a $C_n$-action on $S_g$. Then $D$ is said to be a: 
\begin{enumerate}[(i)]
\item \textit{rotational action}, if either $r(D) \neq 0$, or $D$ is of the form 
$$(n,g_0;\underbrace{(s,n),(n-s,n),\ldots,(s,n),(n-s,n)}_{k \,pairs}),$$ 
for integers $k \geq 1$ and $0<s\leq n-1$ with $\gcd(s,n)= 1$, and $k=1$, if and only if $n>2$.  
\item \textit{Type 1 action}, if $\ell = 3$, and $n_i = n$ for some $i$.  (Note that this is a special type of quasi-platonic action~\cite{BA}.)
\item \textit{Type 2 action}, if $D$ is neither a rotational nor a Type 1 action.
\end{enumerate}
\end{definition} 

\noindent If $g_0(D) = 0$, then we call $D$ a \textit{spherical} action. The following theorem gives a geometric realization of spherical Type 1 actions.
\begin{theorem}\label{res:1}
For $g \geq 2$, a spherical Type 1 action $$D = ((n,0;(c_1,n_1),(c_2,n_2),(c_3,n))$$ on $S_g$ can be realized explicitly as the rotation $\theta_D = 2\pi c_3^{-1}/n$ of a hyperbolic polygon $\p_D$ with a suitable side-pairing $W(\p_D)$, where $\p_D$ is a hyperbolic  $k(D)$-gon with
$$ \small k(D) := \begin{cases}
2n, & \text { if } n_1,n_2 \neq 2, \text{ and } \\
n, & \text{otherwise, }
\end{cases}$$
and for $0 \leq m\leq n-1$, 
$$ \small
W(\p_D) =
\begin{cases}
\displaystyle  
  \prod_{i=1}^{n} a_{2i-1} a_{2i} \text{ with } a_{2m+1}^{-1}\sim a_{2z}, & \text{if } k(D) = 2n, \text{ and } \\
\displaystyle
 \prod_{i=1}^{n} a_{i} \text{ with } a_{m+1}^{-1}\sim a_{z}, & \text{otherwise,}
\end{cases}$$
where $\displaystyle z \equiv m+qj \pmod{n}, \,q= (n/n_2)c_3^{-1}$, and $j=n_{2}-c_{2}$.
\end{theorem}

\begin{definition}
\label{rem:triv_self_comp}
Let $D = (n,g_0; (c_1,n_1), (c_2,n_2),\ldots, (c_{\ell},n_{\ell}))$ be a $C_n$-action on $S_g$.  For a given $g'\geq 1$, one can obtain a new action  from $D$ by removing cyclically permuted (mutually disjoint) disks around points in an orbit of size $n$, and then attaching $n$ copies of the surface $S_{g',1}$ along the resultant boundary components. The resultant action, which is uniquely determined up to conjugacy, is denoted by $ \l D, g'\r$, where
$$ \l D, g'\r := (n,g_0+g'; (c_1,n_1), (c_2,n_2),\ldots, (c_{\ell},n_{\ell})).$$

\noindent Given an action of type $\l D,g'\r$ for some $g' \geq 1$, one can reverse the construction process described above to recover the action $D$. We denote this reversal process by $\overline{\l D,g'\r}$ (i.e. $\overline{\l D,g'\r}=D$).
\end{definition}

It is easy to see that a construction of type $\l D, g'\r$ (or the \textit{addition of a $g'$-permutation component}) and $\overline{\l D,g'\r}$ (or the \textit{deletion of a $g'$-permutation component}) for some $g'>0,$ can be realized by $g'$ inductively performed constructions of type $\l D, 1\r$ and $\overline{\l D,1\r}$, respectively.  We will now describe a construction of a new $C_n$-action from a pair of existing $C_n$-actions across a pair of compatible orbits of size $m$, where $m$ is a proper divisor of $n$. 
 
\begin{definition}\label{def:comp_pair}
For $i = 1,2$, two actions
$$D_{i}=(n, g_{i,0}; (c_{i,1} , n_{i,1} ),(c_{i,2},n_{i,2}),\ldots,(c_{i,\ell_i},n_{i,\ell_i}))$$
are said to form an $(r,s)$-\textit{compatible pair} $D = \lp D_1,D_2, (r,s) \rp$ if there exists $1 \leq r \leq \ell_1$ and $ 1 \leq s \leq \ell_2$ such that 
\begin{enumerate}[(i)]
\item $n_{1,r} = n_{2,s} = m$, and
\item $c_{1,r}+c_{2,s} \equiv 0 \pmod{m}$. 
\end{enumerate}
The number $1+g(D) - g(D_1) - g(D_2)$ will be denoted by $A(D).$
\end{definition}

\noindent This is a formalization of the $\alpha$-compatibility described in Section~\ref{sec:intro}, where $\alpha = A(D)$. The following lemma provides a combinatorial recipe for constructing a new action from an $(r,s)$-compatible pair of existing actions. 

\begin{lemma} \label{lem:comp_pair}
Given a pair of cyclic actions as in Definition~\ref{def:comp_pair}, we have
\begin{gather*}
 \lp D_1,D_2, (r,s) \rp=(n,g_{1,0}+g_{2,0};(c_{1,1},n_{1,1}),\dots,\widehat{(c_{1,r},n_{1,r})}, \ldots, (c_{1,\ell_1},n_{1,\ell_1}),-\\ 
 (c_{2,1},n_{2,1}),\dots,\widehat{(c_{2,s},n_{2,s})}, \ldots, (c_{2,\ell_2},n_{2,\ell_2})), 
\end{gather*}
where $A\lp D_1,D_2, (r,s)\rp := \frac{n}{n_{1,r}}.$ 
\end{lemma}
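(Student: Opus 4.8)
The plan is to read off the orbifold invariants of $\lp D_1,D_2,(r,s)\rp$ directly from the underlying geometric gluing, confirm that the resulting tuple is a genuine data set, and then extract the value of $A$ from a Riemann--Hurwitz computation.

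First I would set $m := n_{1,r} = n_{2,s}$ and recall the geometric meaning of the two distinguished pairs. By the description in the preliminaries, the $r$th cone point of $\mathcal{O}_{D_1}$ lifts to an orbit $O_1$ of size $n/m$ on $S_{g(D_1)}$, around each point of which the stabilizer $\langle t^{n/m}\rangle \cong C_m$ acts by a rotation through $2\pi c_{1,r}^{-1}/m$; symmetrically, the $s$th cone point of $\mathcal{O}_{D_2}$ lifts to an orbit $O_2$ of size $n/m$ with local rotation $2\pi c_{2,s}^{-1}/m$. Deleting the cyclically permuted disks around $O_1$ and $O_2$ produces two surfaces with $n/m$ boundary circles each, carrying the restricted $C_n$-actions. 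Since $c_{1,r}+c_{2,s}\equiv 0 \pmod m$ forces $c_{1,r}^{-1}+c_{2,s}^{-1}\equiv 0 \pmod m$, the rotation numbers of the stabilizer generator on the two families of boundary circles are negatives of one another; because the boundary identification is orientation-reversing, this is precisely the condition under which the gluing extends to a smooth, orientation-preserving $C_n$-action in which the glued orbit of $n/m$ curves becomes a \emph{regular} orbit rather than a new cone orbit. Downstairs the gluing is a connected sum of the orbifolds $\mathcal{O}_{D_1}$ and $\mathcal{O}_{D_2}$ along the circles bounding neighborhoods of the two cone points: the orbifold genus therefore adds to $g_{1,0}+g_{2,0}$, the two glued cone points disappear, and every other cone point survives unchanged. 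This yields the claimed tuple.

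Next I would argue that the construction produces an honest $C_n$-action on a closed connected surface, so that Lemma~\ref{prop:ds-action} applies and pins down the conjugacy class. Connectedness follows because the quotient is a connected-sum orbifold and the combined surface-kernel map remains surjective (each $D_i$ is already surjective onto $C_n$). Granting this, the data set is automatically valid, and reading off the invariants as above identifies it with the stated tuple. As a direct consistency check one verifies the one substantive axiom, condition (v) of Definition~\ref{defn:data_set}: using $\sum_{j} (n/n_{1,j})c_{1,j}\equiv 0$ and $\sum_{j}(n/n_{2,j})c_{2,j}\equiv 0 \pmod n$, the sum for the glued tuple (which omits the $r$th and $s$th terms) equals $-(n/m)c_{1,r}-(n/m)c_{2,s} = -(n/m)(c_{1,r}+c_{2,s}) \pmod n$, and this vanishes since $m \mid (c_{1,r}+c_{2,s})$ makes $(n/m)(c_{1,r}+c_{2,s})$ a multiple of $n$.

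Finally I would compute $A\lp D_1,D_2,(r,s)\rp = 1 + g(D) - g(D_1) - g(D_2)$. Writing $R(\cdot)$ for the right-hand side of the Riemann--Hurwitz relation attached to each of the three data sets, the relation gives $n\big(R(D_1)+R(D_2)-R(D)\big) = 2 + 2g(D) - 2g(D_1) - 2g(D_2)$. Since $g_0(D)=g_{1,0}+g_{2,0}$ the orbifold-genus contributions cancel, and all cone contributions cancel except the two deleted ones, leaving $R(D_1)+R(D_2)-R(D) = 2 + 2\big(\tfrac1m - 1\big) = 2/m$. Hence $2 + 2g(D) - 2g(D_1) - 2g(D_2) = 2n/m$, so $A = n/m = n/n_{1,r}$, as claimed. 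I expect the only genuinely delicate step to be the equivariance of the gluing, that is, checking that the compatibility condition $c_{1,r}+c_{2,s}\equiv 0 \pmod m$ is exactly what matches the two boundary rotations so that the action extends smoothly with the glued orbit regular; once that is established, the data-set bookkeeping and the Riemann--Hurwitz arithmetic are routine.
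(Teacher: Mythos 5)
Your proof is correct, but there is nothing in this paper to compare it against: the paper states Lemma~\ref{lem:comp_pair} without proof, importing it (like Lemmas~\ref{res:1}, \ref{lem:self_comp_ds} and~\ref{thm:arb-real}) from the prequel~\cite{PRS}. Taken on its own merits, your argument is complete and is the natural one. The key step is exactly the one you flag: since $c_{1,r}$ and $c_{2,s}$ are units mod $m$, the condition $c_{1,r}+c_{2,s}\equiv 0 \pmod m$ is equivalent to $c_{1,r}^{-1}+c_{2,s}^{-1}\equiv 0 \pmod m$, and by the paper's convention (the generator $t^{n/m}$ of the stabilizer rotates each deleted disk by $2\pi c^{-1}/m$) this is precisely what lets the orientation-reversing identification of the two families of $n/m$ boundary circles commute with the two actions. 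Your observation that the glued curves form a regular orbit (each is preserved by $\langle t^{n/m}\rangle$ acting by nontrivial, hence fixed-point-free, rotations) justifies that no new cone points appear, and the connected-sum description of the quotient then yields the displayed tuple; the verification of condition (v) of Definition~\ref{defn:data_set} is a sensible sanity check, even though, as you say, validity is automatic from Lemma~\ref{prop:ds-action} once the glued action exists. Your Riemann-Hurwitz bookkeeping is also right: with $R(\cdot)$ denoting the right-hand side of the Riemann-Hurwitz relation, all terms cancel except the two deleted cone contributions, giving $R(D_1)+R(D_2)-R(D)=2/m$ and hence $A\lp D_1,D_2,(r,s)\rp = n/m = n/n_{1,r}$, matching the quantity $1+g(D)-g(D_1)-g(D_2)$ from Definition~\ref{def:comp_pair}.
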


\noindent It is always possible to construct a new $C_n$ action from a pair of $C_n$ actions $D_i$ as in Definition~\ref{def:comp_pair} across a pair of orbits of size $n$. 
\begin{definition}
Given actions $D_i$ as in Definition~\ref{def:comp_pair}, the action 
\begin{gather*}
\lp D_1,D_2\rp :=(n,g_{1,0}+g_{2,0};(c_{1,1},n_{1,1}),\ldots, (c_{1,\ell_1},n_{1,\ell_1}),-\\
 (c_{2,1},n_{2,1}), \ldots, (c_{2,\ell_2},n_{2,\ell_2})), 
\end{gather*}
where $g(\lp D_1,D_2\rp) = g(D_1)+g(D_2)+n-1$ and $A\lp D_1, D_2 \rp := n$, is said to be a \textit{full compatibility} between the actions $D_i$. 
\end{definition}

\noindent A pair of compatible orbits of the same action on a surface can also be identified to build a new action.

\begin{definition}\label{def:self_comp_ds}
For $\ell \geq 4$, let $D = (n,g_0; (c_1,n_1), (c_2,n_2),\ldots, (c_{\ell},n_{\ell})),$
be a $C_n$-action. Then $D$ is said yield an $(r,s)$-\textit{self compatible} action $D' = \l D,(r,s)\r$, if there exist $1 \leq r < s \leq \ell$ such that
\begin{enumerate}[(i)]
\item $n_r = n_s = m$, and
\item $\displaystyle c_r+c_s \equiv 0 \pmod{m}$. 
\end{enumerate}
The number $g(D')- g(D)$ will be denoted by $A(D')$.
\end{definition}

\noindent Note that this is the self $\alpha$-compatibility described in Section~\ref{sec:intro}, where $\alpha = A(D')$.  The following result gives an explicit realization of the $(r,s)$-\textit{self compatible} action yielded by an action $D$ as above.

\begin{lemma} \label{lem:self_comp_ds}
Let $D$ be an $(r,s)$-self compatible $C_n$-action as in Definition~\ref{def:self_comp_ds}. Then
 we have $$\l D,(r,s)\r=(n,g_0+1;(c_{1},n_{1}),\dots,\widehat{(c_{r},n_{r})}, \ldots, \widehat{(c_{r},n_{s})}, 
 \dots, (c_{\ell},n_{\ell})),$$ where
 $g(\l D,(r,s)\r) = g(D)+n/n_r.$
\end{lemma}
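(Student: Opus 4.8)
The plan is to follow the geometric construction underlying the self-compatibility operation, read off the signature and generating vector of the resulting action, and then fix the genus via the Riemann--Hurwitz equation. A convenient simplification is that, by Lemma~\ref{prop:ds-action}, the data set attached to any genuine $C_n$-action on a closed surface is automatically valid; so it suffices to produce the action geometrically and identify its quotient-orbifold data, and I need not re-check the axioms of Definition~\ref{defn:data_set} directly.

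First I would recall the geometric meaning of $D$: the cone point at position $r$ (of order $n_r = m$) lifts to an orbit $O_r \subset S_g$ of size $n/m$ around whose points $h$ induces the local rotation $2\pi c_r^{-1}/m$, and the cone point at position $s$ lifts to an orbit $O_s$ of the same size with local rotation $2\pi c_s^{-1}/m$. As prescribed by Definition~\ref{def:self_comp_ds}, the action $\l D,(r,s)\r$ is realized by excising a $C_n$-equivariant family of mutually disjoint invariant disks around the points of $O_r$ and of $O_s$ and then identifying each $O_r$-boundary circle with an $O_s$-boundary circle by a $C_n$-equivariant, orientation-reversing homeomorphism. The hypothesis $c_r + c_s \equiv 0 \pmod m$ is precisely what permits this: it gives $c_s^{-1} \equiv -c_r^{-1} \pmod m$, so the two induced circle rotations are mutually inverse and an orientation-reversing identification intertwines them, whence the $C_n$-action extends across the glued surface. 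As in the compatible-pair case of Lemma~\ref{lem:comp_pair}, the construction is independent of the choices made and so is well defined up to conjugacy.

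Next I would read off the new quotient orbifold. Excising the two order-$m$ cone points at positions $r$ and $s$ from the connected quotient $\mathcal{O}_h$ creates two boundary circles, and the equivariant gluing descends to an identification of these two circles; since both lie on the same connected component, the identification attaches a handle and raises the orbifold genus from $g_0$ to $g_0 + 1$. Because the gluing occurs away from every special point, it creates no new cone point and leaves the orders and rotation numbers of the remaining orbits unchanged, so the new signature retains exactly the pairs $(c_j,n_j)$ with $j \neq r,s$. This is the asserted data set. (As a consistency check, the hypothesis meshes with axiom (v) of Definition~\ref{defn:data_set}: the two deleted terms satisfy $\tfrac{n}{n_r}c_r + \tfrac{n}{n_s}c_s = \tfrac{n}{m}(c_r + c_s) \equiv 0 \pmod n$, so the congruence $\sum_j \tfrac{n}{n_j}c_j \equiv 0 \pmod n$ persists after deletion.)

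For the genus I would compare the Riemann--Hurwitz equations of Definition~\ref{defn:data_set} for $D$ and for $\l D,(r,s)\r$. Passing from $D$ to $\l D,(r,s)\r$ replaces $g_0$ by $g_0 + 1$ and removes the two summands $\tfrac{1}{n_r}-1$ and $\tfrac{1}{n_s}-1$, each equal to $\tfrac{1}{m}-1$; all remaining terms cancel and one is left with $\tfrac{2}{n}\bigl(g(\l D,(r,s)\r)-g(D)\bigr) = \tfrac{2}{m}$, that is, $g(\l D,(r,s)\r) = g(D) + n/m = g(D)+n/n_r$. Equivalently, the $n/m$ equivariant circle-gluings each add a handle to the connected surface $S_g$. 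I expect the main obstacle to be the construction step of the second paragraph---rigorously producing the $C_n$-equivariant, orientation-reversing gluing and showing the resulting action is well defined up to conjugacy; this is exactly where the compatibility condition is used, and I would pattern the argument on the proof of Lemma~\ref{lem:comp_pair}, the one essential difference being that here both orbits lie on a single connected surface, which is what produces the extra handle and the ``$+1$'' in the orbifold genus.
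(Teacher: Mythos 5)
Your proof is correct and takes the natural route: the equivariant excise-and-glue construction (with the condition $c_r+c_s\equiv 0 \pmod m$ used exactly where it is needed, to make the orientation-reversing identification intertwine the inverse boundary rotations), the identification of the quotient orbifold as genus $g_0+1$ with the cone points at positions $r,s$ deleted, and the Riemann--Hurwitz computation giving $g(D)+n/n_r$ are all sound. Note that the paper itself states this lemma without proof, importing it from~\cite{PRS}; your argument, patterned on the compatible-pair case of Lemma~\ref{lem:comp_pair}, is the standard one for this statement, so there is no substantive divergence to report.
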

We conclude this section with the following theorem that provides a realization of arbitrary Type 2 actions on $S_g$.
\begin{theorem}\label{thm:arb-real}
For $g \geq 2$, a Type 2 action on $S_g$ can be constructed from finitely many compatibilities of the following types between spherical Type 1 actions:
\begin{enumerate}[(i)]
\item $\l D, (r,s)\r$,
\item $\l D,g' \r$, $\overline{\l D,g' \r}$,
\item $\lp D_1,D_2, (r,s)\rp$, and
\item $\lp D_1,D_2 \rp$.
\end{enumerate}
\end{theorem}

\section{Decomposing cyclic actions into irreducibles}
\label{sec:cyclic_irreducibles}
In this section, we generalize Theorem~\ref{thm:arb-real} to obtain a topological description of the decomposition of an arbitrary cyclic action into irreducible components. We show that this decomposition can be visualized as a ``necklace with beads", where the beads are the irreducible components, and strings that connect a pair of beads symbolize the compatibility between them. We will now present an example that captures this idea.
\begin{example}
\label{eg: irr_type2_necklace}
Consider the spherical Type 1 actions $D_1=(42,0;(2,21), \linebreak(19,42),(19,42))$, $D_2=(42,0;(5,6),(13,21),(23,42))$, $D_3 = (42,0;(1,14),\linebreak(8,21),(23,42))$, $D_4=(42,0;(1,6),(11,21),(13,42))$, $D_5=(42,0;(13,14), \linebreak(10,21),(25,42))$, and $D_6 = (42,0;(19,21),(17,42),(29,42))$. The compatibilities $\lp D_1, D_2, (3,3)\rp$, $\lp D_2, D_3, (2,2) \rp$, $\lp D_3,D_4 \rp$, $\lp D_4, D_5, (2,2) \rp$, and $\lp D_5, D_6, (3,2) \rp$, together realize the action \\
$D' = (42,0;(2,21),(19,42),(5,6),(23,42),(1,14),(1,6),(13,42),(13,14),\linebreak(19,21),(29,42))$
on $S_{155}$.
\\ 
\noindent A visual interpretation of this realization is shown in Figure~\ref{fig:linear_chain} below, where the number of lines connecting $D_i$ to $D_j$ are the sizes of the compatible orbits. (Note that the number 42 refers to the  number of lines connecting $D3$ to $D_4$.)
 \begin{figure}[H]
\labellist
\small
\pinlabel $D_1$ at 77 65
\pinlabel $D_2$ at 280 65
\pinlabel $D_3$ at 483 65
\pinlabel $42$ at 584 110
\pinlabel $D_4$ at 685 65
\pinlabel $D_5$ at 888 65
\pinlabel $D_6$ at 1091 65
\endlabellist
\centering
\includegraphics[width = 70 ex]{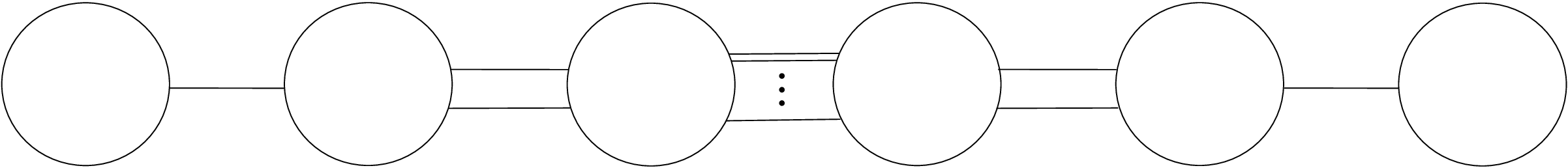}
\caption{A visualization of the action $D'$.}
\label{fig:linear_chain}
\end{figure}
\noindent
\end{example}

\begin{remark}
While we realize new actions from successive compatibilities across actions (represented by data sets), for simplicity, we assume from here on that the original indexing of the pairs (that correspond to cone points) in the data sets remains unaltered during the entire process. 
\end{remark}

\noindent Fixing the notation $\lp D_1, D_2, (0,0) \rp := \lp D_1,D_2 \rp$, we formalize this idea in the following definition.

\begin{definition}
\label{def:linear_chain}
For $1 \leq i \leq k$, let $D_i$ be a collection of cyclic actions of order $n$ on $S_{g_i}$. Then the $D_i$ are said to form a \textit{linear $k$-chain $T=(D_1,\ldots,D_k)$} if the following conditions hold. 
\begin{enumerate}[(i)]
\item Each $D_i$ is a spherical Type 1 action.
\item For $1 \leq i \leq k-1$, there exists non-negative integers $r_i$ and $s_i$ such that the tuples given by $$D_1' = \lp D_1, D_2, (r_1,s_1) \rp \text{ and }
D_j' = \lp D_j', D_{j+1}, (r_j,s_j) \rp, \text{ for } 2 \leq j \leq k-1,$$ are well-defined data sets.
\end{enumerate}
\noindent If in addition to conditions (i)-(iii), there exist non-negative integers $\tilde{r}_k$ and $\tilde{s}_k$ such that the tuple
$D_k' =  \lp D_{k-1}',D_1', (\tilde{r}_k,\tilde{s}_k) \rp$ is also a well-defined data set, then $T$ is said to be a \textit{closed $k$-chain}.
\end{definition}
\noindent Given a $k$-chain $T$ as above, we fix the following notation.
\begin{enumerate}[(a)]
\item $T_{i,j}:=(D_i,D_{i+1},\ldots,D_j)$, if $T$ is not closed.
\item $\mathfrak{C}(T) := \begin{cases}
((r_1,s_1),\ldots,(r_{k-1},s_{k-1}),(\tilde{r}_k,\tilde{s}_k)), & \text{if } T \text{ is closed, and} \\
((r_1,s_1),\ldots,(r_{k-1},s_{k-1})), & \text{otherwise.}
\end{cases}$
\item $D_T := 
\begin{cases}
D_k', & \text{if T is closed, and}\\
D_{k-1}', & \text{otherwise.}
\end{cases}$
\item $A_T := 
\begin{cases}
\sum_{i=1}^k A(D_i'), & \text{if T is closed, and}\\
\sum_{i=1}^{k-1} A(D_i'), & \text{otherwise.}
\end{cases}$ 
\item $f(T) := |\{j:(r_j,s_j) = (0,0)\}|$.
\end{enumerate}

\noindent It is implicit in Definition~\ref{def:linear_chain} that for $1 \leq i < j \leq k$, the tuple $T_{i,j}$ forms a linear $(j-i+1)$-chain. 
\begin{example}\label{eg:linear} 
In Example~\ref{eg: irr_type2_necklace} above, $(D_i,D_{i+1}, \ldots, D_j)$, for $1 \leq i 
< j \leq 6$ form linear chains. In particular, for the linear chain $T = (D_1,\ldots,D_6)
$, we have $\mathfrak{C}(T) = ((3,3),(2,2),(0,0),(2,2),(3,2))$.
\end{example}
\begin{example}
\label{eg:generic_necklace}
In Example~\ref{eg: irr_type2_necklace}, we can simultaneously add the self compatibilities $\l D', (1,9)\r$, $\l D', (2,4) \r$, $\l D', (5,8) \r$, and $\l D', (7,10) \r$ to realize the $C_{42}$-action on $S_{162}$ given by $D''=(42,4; (5,6), (1,6)).$ An illustration of this realization is given in Figure~\ref{fig:irr_Type2_necklace} below. 
 \begin{figure}[H]
\labellist
\small
\pinlabel $D_1$ at 77 125
\pinlabel $D_2$ at 280 125
\pinlabel $D_3$ at 483 125
\pinlabel $42$ at 584 170
\pinlabel $D_4$ at 685 125
\pinlabel $D_5$ at 888 125
\pinlabel $D_6$ at 1091 125
\endlabellist
\centering
\includegraphics[width = 70 ex]{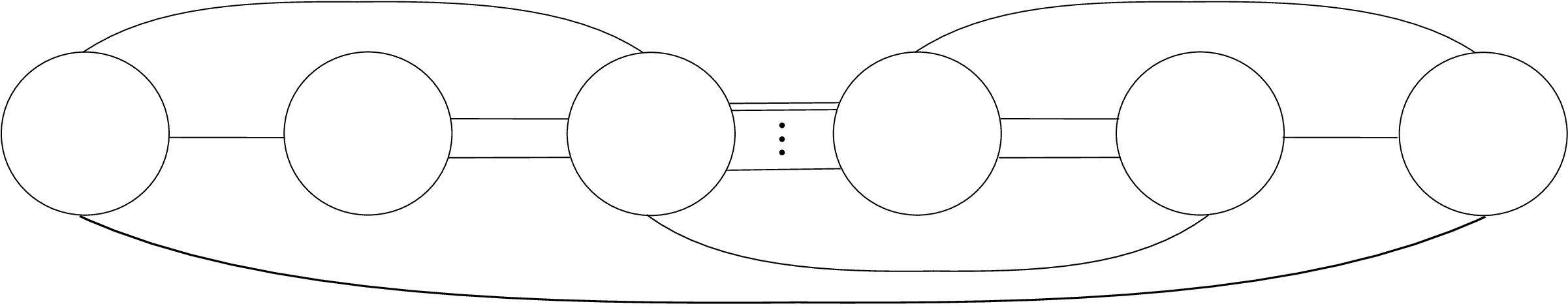}
\caption{A visualization of the action $D''$.}
\label{fig:irr_Type2_necklace}
\end{figure}
\noindent Furthermore, we delete a $3$-permutation component to obtain a realization of the action $D=(42,1; (5,6), (1,6))$ on $S_{36}$.
\end{example}

\noindent This leads us to the following definition for which we fix the notation that $\l D, 0 \r := D$ and $\overline{\l D, 0 \r} := D.$
\begin{definition}
\label{def: compatibility-gen}
Let $T'=(D_1,\ldots,D_k)$ be a linear $k$-chain as in Definition~\ref{def:linear_chain}. Then for integers $1 \leq x_i < y_i \leq k$, $g' \geq 0$, and $0 \leq g'' \leq g'+m$,  the tuple 
$$\N := (T';((x_1,y_1),\ldots,(x_m,y_m)); (g',g''))$$ 
is said to form a \textit{necklace with $k$ beads} if it satisfies the following conditions.
\begin{enumerate} [(i)]
\item If $m>1$, then for $1 \leq j \leq m$, there exists pairs of non-negative integers $(r_j',s_j')$ and the tuples $D_T^j$ given by
$$D_T^1 := \l D_{T'}, (r_1',s_1') \r \text{ and } D_T^{j} := \l D_T^{j-1}, (r_j',s_j') \r,  \text{ for }2 \leq j \leq m,$$ are well-defined data sets. 
\item  For $1 \leq j \leq m$, the tuples $T_{x_j,y_j} = (D_{x_j},D_{x_j+1},\ldots,D_{y_j})$ form closed chains satisfying $A_{T_{x_j,y_j}} = A(D_T^j)$.
\item The tuples $D_{\N}':=\l D_T^{m-1},g' \r$ and $D_{\N} :=\overline{\l D_{\N}', g''\r}$ are both well-defined data sets. 
\end{enumerate}
\end{definition}
\noindent Given a necklace $\N$ as in Definition~\ref{def: compatibility-gen}, we define
$$T_{\N}:= T' \text{ and } \mathfrak{C}(\N) = (\mathfrak{C}(T_{\N}); ((r_1',s_1'),\ldots,(r_m',s_m'))).$$
\noindent It follows by definition that if we replace the $(g',g'')$ with a pair $(g'+p,g''+p)$, where $p$ is a natural number, then the necklace remains unchanged. So for the case when $g' = g''$, we simply omit the pair $(g',g'')$. Moreover, we allow $m = 0$ in a necklace $\N$, in which case, we simply write $\N := (T_{\N}; (g',g'')).$ 

\begin{example}
\label{eg:specific_necklace}
Going back to Example~\ref{eg:generic_necklace},  we see that the action $D$ is realized as a necklace with $6$ beads $$\N = ((D_1,\ldots,D_6);((1,3),(1,6),(3,5),(4,6));(0,3)),$$
where $$\mathfrak{C}(\N) = (((3,3),(2,2),(0,0),(2,2),(3,2));((3,3),(1,1),(1,1),(3,3))).$$ It is interesting to note that the subnecklaces $((D_1,D_2,D_3);((1,3));(0,1))$ and $((D_4,D_5,D_6);((4,6));(0,1))$ are spherical Type 2 actions.
\end{example}
\noindent We will now show that an arbitrary cyclic action can be realized as a necklace, as described in Definition~\ref{def: compatibility-gen}. 

\begin{theorem}
\label{thm:arb_cyc_real}
Given an arbitrary cyclic action $D$ on $S_g$ with $n(D) \geq 3$, there exists a necklace $\N$ with $k$ beads, for some $k \geq 0$, such that $D_{\N} = D$.
\end{theorem}

\begin{proof}
We first consider the case when $D$ is a rotational action. If $D$ is a free rotation of $S_g$ by $2\pi r/n$, then it is of the form $D = (n,\frac{g-1}{n}+1,r;)$, which is realized by the necklace
by $\N = ((D_1,D_1^{-1}); ((1,2),(1,2));)$, where 
$$D_1 = 
\begin{cases} 
(n,0;(r,n),(r,n),(r(n-2),n)), & \text{if } n \text{ is odd, and} \\
(n,0;(r,n),(r,n),(r(\frac{n-2}{2}),\frac{n}{2})), & \text{if } n \text{ is even,}
\end{cases}$$

\noindent $D_1^{-1}$ represents the inverse of the action $D_1$, and $\mathfrak{C}(T_{\N}) = 
((1,1),(2,2),(3,3))$. Along similar lines, we can see that when $D$ is a non-free rotation of $S_g$ by $2\pi r/n$, it is of the form $D_1 = (n,\frac{g}{n};(r,n),(n-r,n);)$, and so $D = D_{\N}$, where $\N = ((D_1,D_1^{-1}); ((1,2));)$ with $D$ taken as above and $\mathfrak{C}(T_{\N}) = 
((1,1),(2,2))$.


Now let $D$ be an arbitrary non-rotational action (with $n(D)\geq 3$). By an inductive application of Theorem~\ref{thm:arb-real}, we can decompose $D$ into finitely many irreducible spherical components $D_1,\ldots,D_k$ with finitely many compatibilities between them of types $\l D', (r,s)\r$, $\l D',g' \r$, $\overline{\l D,g' \r}$, $\lp D_1',D_2', (r,s)\rp$, and $\lp D_1',D_2' \rp$. By a suitable rearrangement and relabeling of the $D_i$, it can be seen that the $D_i$ form a linear $k$-chain $T' = (D_1,\ldots,D_k)$, where each compatibility between $D_i$ and $D_{i+1}$ is chosen among the compatibilities of types $\l D', (r,s)\r$, $\lp D_1',D_2', (r,s)\rp$, and $\lp D_1',D_2' \rp$, arising from the earlier decomposition. (This is possible because any decomposition of $D$ into irreducible Type 1 actions by virtue of Theorem~\ref{thm:arb-real} would yield a maximal reduction system whose maximality would be contradicted by the absence of such a rearrangement.) Any remaining compatibilities (of types $\l D', (r,s)\r$, $\lp D_1',D_2', (r,s)\rp$, and $\lp D_1',D_2' \rp$) in the decomposition of $D$ can now be perceived as series of self-compatibilties on the chain $T'$ between  specific pairs of its components $D_i$. Let the collections of all such pairs of indices (corresponding to pairs of the $D_i$ involved in self-compatibilities) be $(x_1,y_1), \ldots,(x_m,y_m)$. 

Finally, any compatibities that remain in the original decomposition will be of types $\l D',g' \r$ and $\overline{\l D,g' \r}$. Thus, assuming that there $g'$ compatibilities of type $\l D', (r,s)\r$ and $g''$ of type $\overline{\l D,g' \r}$, we conclude that $D = D_{\N}$, where 
$$\N := (T';((x_1,y_1),\ldots,(x_m,y_m)); (g',g'')).$$
%
%
%
%
%
\end{proof}

\begin{remark}
It is important to note that given an action $D$, there could exist two distinct necklaces $\N_1$ and $\N_2$ such that $D_{\N_1} = D = D_{\N_2}$. For example, consider the action $D=(5,1;(1,5),(2,5),(2,5))$ on $S_2$. This can be realized by the necklace $\N_1 = ((D');;(1,0))$, where $D' = (5,0;(1,5),(2,5),(2,5))$. Alternatively, $D_{\N_2} = D$, for $\N_2 = ((D_1,D_2,D');((1,3));),$ where $D_1 = (5,0;(1,5),(1,5),(3,5)),$ and  $D_2 = (5,0;(2,5),(4,5),(4,5))$. 
\end{remark}

\section{Structures realizing compatibilities}
\label{sec:str_real_comps}
In this section, we classify the structures that realize the individual components and compatibilities that constitute a necklace, as described in Definition~\ref{def: compatibility-gen}. We begin by describing the structures that realize spherical Type 1 actions, which form the beads of the necklace. 

\subsection{Spherical Type 1 actions} In this subsection, we show that the structure $\p_D$ (described in Theorem~\ref{res:1}) that realizes a Type 1 action $D$ is unique. 

\begin{proposition}\label{thm:irr_type1}
If $D$ is a spherical Type 1 action, then $\text{Fix}(\langle D \rangle) = \{\p_D\}$. 
\end{proposition}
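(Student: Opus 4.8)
The plan is to show that a spherical Type 1 action $D$ admits a \emph{unique} fixed point in $\Teich(S_g)$, equivalently that the hyperbolic structure $\p_D$ produced in Lemma~\ref{res:1} is the only $D$-invariant hyperbolic structure up to isotopy. Recall that for a spherical Type 1 action we have $g_0(D)=0$ and $\mathcal{O}_D=S_g/\langle D\rangle$ is a sphere with exactly three cone points of orders $n_1,n_2,n_3$, one of which equals $n$. A point of $\Fix(\langle D\rangle)$ is precisely a hyperbolic structure on $S_g$ for which $D$ acts as an isometry; by passing to the quotient, such a structure descends to a hyperbolic cone metric on the orbifold $\mathcal{O}_D$, i.e. a point of the Teichm\"uller space $\Teich(\mathcal{O}_D)$ of the quotient orbifold. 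Conversely any hyperbolic structure on $\mathcal{O}_D$ pulls back to a $D$-invariant structure on $S_g$. So the first and central reduction is the natural identification $\Fix(\langle D\rangle)\approx\Teich(\mathcal{O}_D)$.

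The second step is then purely a matter of the quotient orbifold: I would invoke the fact that the Teichm\"uller space of a hyperbolic orbifold of signature $(g_0;n_1,n_2,n_3)$ has dimension $6g_0-6+2(\#\text{cone points})$, which here is $6\cdot 0-6+2\cdot 3=0$. A triangle orbifold (a sphere with three cone points) carries a \emph{rigid} hyperbolic structure: by the hyperbolic law of cosines the side lengths and hence the whole metric of a hyperbolic triangle are determined by its three angles $\pi/n_1,\pi/n_2,\pi/n_3$, so there is exactly one such structure up to isometry. This rigidity is exactly what forces $\Teich(\mathcal{O}_D)$ to be a single point, and pulling back gives that $\p_D$ is the unique invariant structure, so $\Fix(\langle D\rangle)=\{\p_D\}$ is a singleton.

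To make the reduction $\Fix(\langle D\rangle)\approx\Teich(\mathcal{O}_D)$ precise, I would use the covering-space description already set up in Section~\ref{prel}: the branched cover $S_g\to\mathcal{O}_D$ with orbifold fundamental sequence $1\to\pi_1(S_g)\to\pi_1^{\mathrm{orb}}(\mathcal{O}_D)\xrightarrow{\rho}C_n\to1$. A $D$-invariant hyperbolic structure is a discrete faithful representation $\pi_1(S_g)\to\PSL(2,\R)$ whose normalizer realizes the $C_n$-action, equivalently an extension to a representation of $\pi_1^{\mathrm{orb}}(\mathcal{O}_D)$; two such are equivalent in $\Fix(\langle D\rangle)$ iff the induced orbifold structures agree in $\Teich(\mathcal{O}_D)$. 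Here it is worth remarking that the explicit model of Lemma~\ref{res:1}, where $D$ is the rotation of the polygon $\p_D$ by $\theta_D$, is what exhibits at least one point of $\Fix(\langle D\rangle)$ and pins down the geometry concretely; the content of the Proposition is the uniqueness.

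The main obstacle I anticipate is precisely the careful verification of the correspondence $\Fix(\langle D\rangle)\leftrightarrow\Teich(\mathcal{O}_D)$, rather than the triangle rigidity, which is classical. One must check both directions cleanly: that every $D$-invariant structure descends to a genuine hyperbolic cone structure on the quotient (no collapsing at cone points, correct cone angles $2\pi/n_i$), and that the lift of the unique orbifold structure is isotopic to $\p_D$ as a marked structure, so that distinct-looking invariant structures cannot give the same orbifold point yet differ in $\Teich(S_g)$. Equivariance of the markings — ensuring the identification respects the $C_n$-action determined by $\rho$ and does not secretly allow a modular ambiguity — is the delicate point; once that is settled, the zero-dimensionality and rigidity of the triangle orbifold immediately yield that $\Fix(\langle D\rangle)$ is a single point.
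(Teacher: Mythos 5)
Your argument is correct, but it takes a genuinely different route from the paper's. The paper never passes to the quotient orbifold: it works directly with the polygon $\p_D$ of Lemma~\ref{res:1}, observing that any invariant structure realizes the rotation $\theta_D$ as an isometry, so the triangles $P_iOP_{i+1}$ cut out by the central fixed point are mutually congruent with forced angles, and rigidity of hyperbolic triangles with prescribed angles then makes $\p_D$, hence the invariant structure, unique --- the whole argument is elementary hyperbolic trigonometry inside $S_g$. You instead identify $\Fix(\langle D\rangle)$ with $\Teich(\mathcal{O}_D)$ and quote rigidity of the triangle orbifold. Both proofs bottom out in the same classical fact (a hyperbolic triangle is determined by its angles), but the trade-offs differ. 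Your reduction is more general: it treats all signatures uniformly and immediately yields Harvey's dimension formula $6g_0+2c-6$, which is essentially Harvey's original argument for Corollary~\ref{cor:dim-branch-loci}. The cost is that the identification $\Fix(\langle D\rangle)\approx\Teich(\mathcal{O}_D)$ --- descent and lifting of marked structures, equivariance, absence of modular ambiguity --- is itself a nontrivial theorem, and you correctly flag it as the place where the real work lies; moreover, since this paper's stated aim is to recover Harvey's and Gilman's results as \emph{consequences} of its explicit description of $\Fix$, importing that machinery is circular in spirit (though not logically wrong if cited to the literature). The paper's hands-on approach buys self-containedness and, importantly for the sequel, pins down the unique fixed point concretely as the polygon class $[\p_D]$, which is exactly the form in which it enters Theorem~\ref{thm: main}.
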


\begin{proof}
First consider the case when $n_i=2$ for some $i$. Then $D$ can be realized as a rotation of the regular hyperbolic $n$-gon $\p_D$ (as in Theorem~\ref{res:1}), with all interior angles equals to $2\pi/n_2$. It follows from basic hyperbolic trigonometry that such a hyperbolic polygon is unique, which proves the result for this case.

When $n_1,n_2\neq 2$, $\p_D$ is a semi-regular hyperbolic $2n$-gon with side length $\ell$, and alternate interior angles of measure $2\pi/n_1$ and $2\pi/n_2$, respectively. Let $\{P_0,\dots,P_{2n-1}\}$ be the vertices of $\p_D$ and $O$ denotes the fixed point at the center, as shown in Figure~\ref{fig:s14_fixpt} below. 
\begin{figure}[H]
\labellist
\small
\pinlabel $P_i$ at 165 8
\pinlabel $P_{i+1}$ at 245 8
\pinlabel $P_{i+2}$ at 315 43
\pinlabel $O$ at 180 200
\endlabellist
\centering
\includegraphics[width = 30 ex]{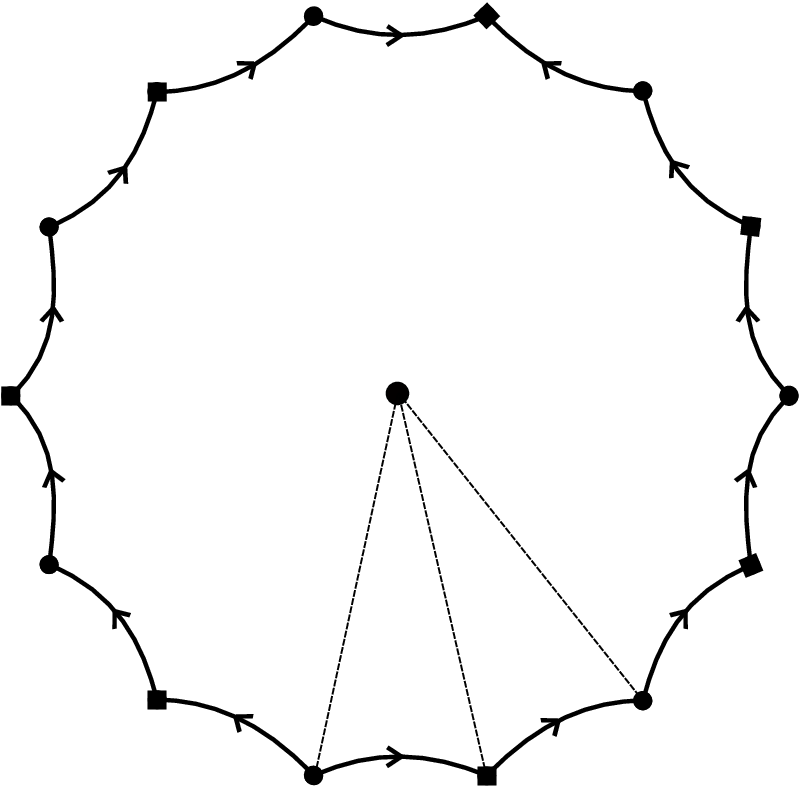}
\caption{The polygon $\p_D$ for a $C_7$-action on $S_3$.}
\label{fig:s14_fixpt}
\end{figure}
As the rotation of $\p_D$ by $\theta_D$ is an isometry, it follows that $|OP_i| = |OP_{i+2}|$, for all $i$.  Hence, the hyperbolic $SSS$ congruence implies that the triangles $P_iOP_{i+1}$ are mutually congruent to each other, with $\angle P_iOP_{i+1}=\pi/n$, $\angle OP_i P_{i+1}=2\pi/n_1$, and $\angle OP_{i+1} P_i=2\pi/n_2$. Thus, $\p_D$ is uniquely determined, and the assertion follows.
\end{proof}

\begin{remark}
\label{rem:fixD}
Let $D$ be a reducible action on $S_g$, and let $\C$ be a maximal reduction system for $D$. By extending $\C$ to a pants decomposition $P$ of $S_g$, we see that $\dim (\Fix ( \langle D \rangle )) \geq 2|\C| > 0$. Conversely, suppose that $\dim(\Fix(\langle D \rangle)) > 0$, we can reverse the above argument to show that $D$ is reducible. 
\end{remark}

\noindent The following corollary is immediate from Proposition~\ref{thm:irr_type1} and Remark~\ref{rem:fixD}.

\begin{corollary}
A spherical Type 1 action $D$ is irreducible. 
\end{corollary}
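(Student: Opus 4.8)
The plan is to prove the corollary ``A spherical Type 1 action $D$ is irreducible'' by combining the preceding proposition with the dimension criterion recorded in Remark~\ref{rem:fixD}. The key logical hinge is the contrapositive equivalence established in that remark: an action on $S_g$ is reducible if, and only if, $\dim(\Fix(\langle D\rangle)) > 0$. So the entire proof reduces to showing that a spherical Type 1 action has a zero-dimensional fixed-point set in Teichm\"uller space.

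\medskip

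First I would invoke Proposition~\ref{thm:irr_type1}, which asserts that for a spherical Type 1 action $D$ one has $\Fix(\langle D\rangle) = \{\p_D\}$, a singleton. Since a single point is a $0$-dimensional space, this immediately yields $\dim(\Fix(\langle D\rangle)) = 0$. Next I would apply the converse direction of Remark~\ref{rem:fixD}: if $D$ were reducible, then extending a maximal reduction system $\C$ to a pants decomposition would force $\dim(\Fix(\langle D\rangle)) \geq 2|\C| > 0$, contradicting the vanishing of the dimension just obtained. Therefore $D$ cannot be reducible, i.e. $D$ is irreducible, which is the desired conclusion.

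\medskip

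There is essentially no technical obstacle here, since both ingredients are already in place: the uniqueness of the hyperbolic polygon $\p_D$ (proved via the $SSS$ congruence argument in Proposition~\ref{thm:irr_type1}) and the dimension inequality $\dim(\Fix(\langle D\rangle)) \geq 2|\C|$ for reducible actions (from Remark~\ref{rem:fixD}). The only point requiring mild care is to confirm that ``reducible'' in this context means the existence of a nonempty reduction system $\C$, so that $|\C| \geq 1$ and hence $2|\C| > 0$ strictly. Given a finite-order mapping class, reducibility is precisely the existence of an invariant essential multicurve, which furnishes such a nonempty $\C$; this matches the convention under which Remark~\ref{rem:fixD} was stated. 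The complete proof is thus a short two-line deduction chaining Proposition~\ref{thm:irr_type1} into Remark~\ref{rem:fixD}, and I would expect the author's proof to be exactly this immediate consequence rather than any fresh computation.
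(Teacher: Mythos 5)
Your proof is correct and matches the paper exactly: the authors also state the corollary as immediate from Proposition~\ref{thm:irr_type1} (the fixed-point set is the singleton $\{\p_D\}$, hence zero-dimensional) combined with Remark~\ref{rem:fixD} (reducibility would force $\dim(\Fix(\langle D\rangle)) \geq 2|\C| > 0$). Nothing is missing; your two-step deduction is precisely the intended argument.
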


\subsection{Compatibilities of type $\lp D_1,D_2,(r,s) \rp$ and $\l D, (r,s) \r$} Consider an irreducible Type 1 action $D$ on $S_g$, and a $D$-orbit of size $k.$  Removing $k$ mutually disjoint cyclically permuted (by the action of $D$) discs around the points in this orbit, we obtain a homeomorphic copy of $S_{g,k}$ with a homeomorphism $\hat{D}$ induced by $D$, which cyclically permutes the components of $\partial S_{g,k}$. 
Note that $\Teich(S_g)$ can be viewed as a subspace of $\Teich(S_{g,k})$ in the following manner. The Fenchel-Nielsen coordinates of an arbitrary structure $\xi \in \Teich(S_g)$ are given by $\xi= \prod_i^{3g-3} (\ell_i, \theta_i),$
where the pair $(\ell_i, \theta_i)$ denote the length and twist parameters contributed by the $i$-th curve of a pants decomposition $P$ of $S_g$ where $i=1,\ldots, 3g-3$. $P$ can always be extended to a pants decomposition $\hat{P}$ of $S_{g,k}$ where the first $3g-3$ non-boundary curves of $\hat{P}$ belong to $P$. As there are $3g-3+k$ non-boundary curves in $\hat{P}$, an arbitrary $\hat{\xi} \in \Teich(S_{g,k})$ can be decomposed as 
{\small $$\hat{\xi}= \prod_i^{3g-3+k} (\ell_i, \theta_i) \times \prod_{j=1}^k \ell_{b_j},$$}
where $\ell_{b_j}$ denotes the length parameter of the $j$-th boundary component (for $j=1,\ldots ,k$) of $S_{g,k}.$ 

In light of the above decomposition of $\hat{\xi}$, two natural questions that arise are: ``Does there exist an endomorphism $\hat{D}_{\#}: \Teich(S_{g,k}) \to \Teich(S_{g,k})$ such that $\hat{D_{\#}}|_{\Teich(S_g)}=D_{\#}$? Moreover, is $\hat{D}_{\#}|_{\Teich(S_{g,k})\setminus \Teich(S_g)}$ a permutation of the coordinates?" We will show shortly that these questions do not always have positive answers. Consider the decomposition
$\Teich(S_{g,k})\approx \mathcal{T}_{NB} \times \mathbb{R}_{+}^k,$ where
$$\mathcal{T}_{NB} = \{\prod_i^{3g-3+k} (\ell_i, \theta_i)\} \text{ (NB refers to ``non-boundary") and }
\mathbb{R}_{+}^k \approx \{\prod_{j=1}^k \ell_{b_j}\}.$$ 
The action of $D$ implies that $\hat{D}_{\#}$, if it exists, should preserve the above decomposition of $\Teich(S_{g,k})$, and furthermore, $\hat{D}_{\#}\left(\prod_{j=1}^k \ell_{b_j}\right)= \prod_{j=1}^k \ell_{b_{\sigma_k(j)}}$ where $\sigma_k=(12\ldots k).$ The following result shows that $\hat{D}_{\#}$ is completely determined by $D_{\#}$ if, and only if, $k$ is a proper divisor of $n$.

\begin{proposition}\label{Thm:induced_action}
Let $D$ be a spherical Type 1 action on $S_g$ of order $n$ with a $D$-orbit  of size $k$.  Then $D_{\#}$ never extends to an endomorphism of $\Teich(S_{g,k})$, which induces an order $n$ permutation of the coordinates of $\Teich(S_{g,k})\setminus \Teich(S_g)$. In particular, the extended action $\hat{D}_{\#}$ is completely determined by $D_{\#}$ if, and only if, $k$ is a proper divisor of $n$.
\end{proposition} 

\begin{proof}
As $D$ is an spherical Type 1 action, we may assume that there exists a pants decomposition $P$ of $S_g$ with $s$ separating curves $\alpha_1,\ldots,\alpha_{s}$ and $r$ non-separating curves $\beta_1,\ldots, \beta_{r}$ such that for each $1 \leq i \leq s-1$, there exist $1 \leq j \leq s$ ($j \neq i$) and $1 <M_{ij} <n$ with $D^{M_{ij}}(\alpha_{i})= \alpha_{j}$. Similarly, for each $1 \leq i \leq r-1$, there exist $1 \leq j \leq r$ ($j \neq i$) and $1 <N_{ij} <n$ such that $D^{N_{ij}}(\beta_{i})= \beta_{j}$. Without loss of generality, we may assume that $D^{N_{1,r}}(\beta_1) = \beta_r$. 

In order that $D_{\#}$ extends to an endormorphism of $\Teich(S_{g,k})$, $P$ should extend to a pants decomposition $\hat{P}$ of $S_{g,k}$ as in the discussion above, with $k$ new non-boundary curves $\gamma_1,\ldots,\gamma_k$ and $k$ boundary curves $\gamma_1',\ldots,\gamma_k'$ such that $\hat{D}(\gamma_i')= \gamma_{i+1}'$, for each $i.$ We may assume that $\gamma_1$ is a nonseparating curve isotopic to $\beta_r$ in $S_g$, and thus $\hat{D}^M(\gamma_1)=\beta_{1}$ (since $D^M(\gamma_1)=\beta_{1}$), and the isotopy class of $\beta_1$ remain unaltered in $S_{g,k}$, as illustrated in Figure \ref{fig:c14_action_on_s3} below. In the case when $k = n$, it is apparent that the curve $\sum_{i} \gamma_i' \in H_1(S_{g,k})$ (indicated by the dotted curve in the polygon, and the curve $\gamma_2$ in the bounded surface in Figure \ref{fig:c14_action_on_s3} below) is left invariant by the action of $D$.
\begin{figure}[htbp]
\labellist
\small
\pinlabel $\beta_1$ at 480 240
\pinlabel $\gamma_1$ at 615 260
\pinlabel $\beta_r$ at 620 150
\pinlabel $\gamma_2$ at 665 260
\pinlabel $\gamma_3$ at 713 260
\pinlabel $\gamma_4$ at 713 140
\pinlabel $\gamma_1'$ at 792 300
\pinlabel $\gamma_2'$ at 792 230
\pinlabel $\gamma_3'$ at 792 168
\pinlabel $\gamma_4'$ at 792 100
\pinlabel $\sum \gamma_i'$ at 200 305
\endlabellist
\centering
\includegraphics[width = 60 ex]{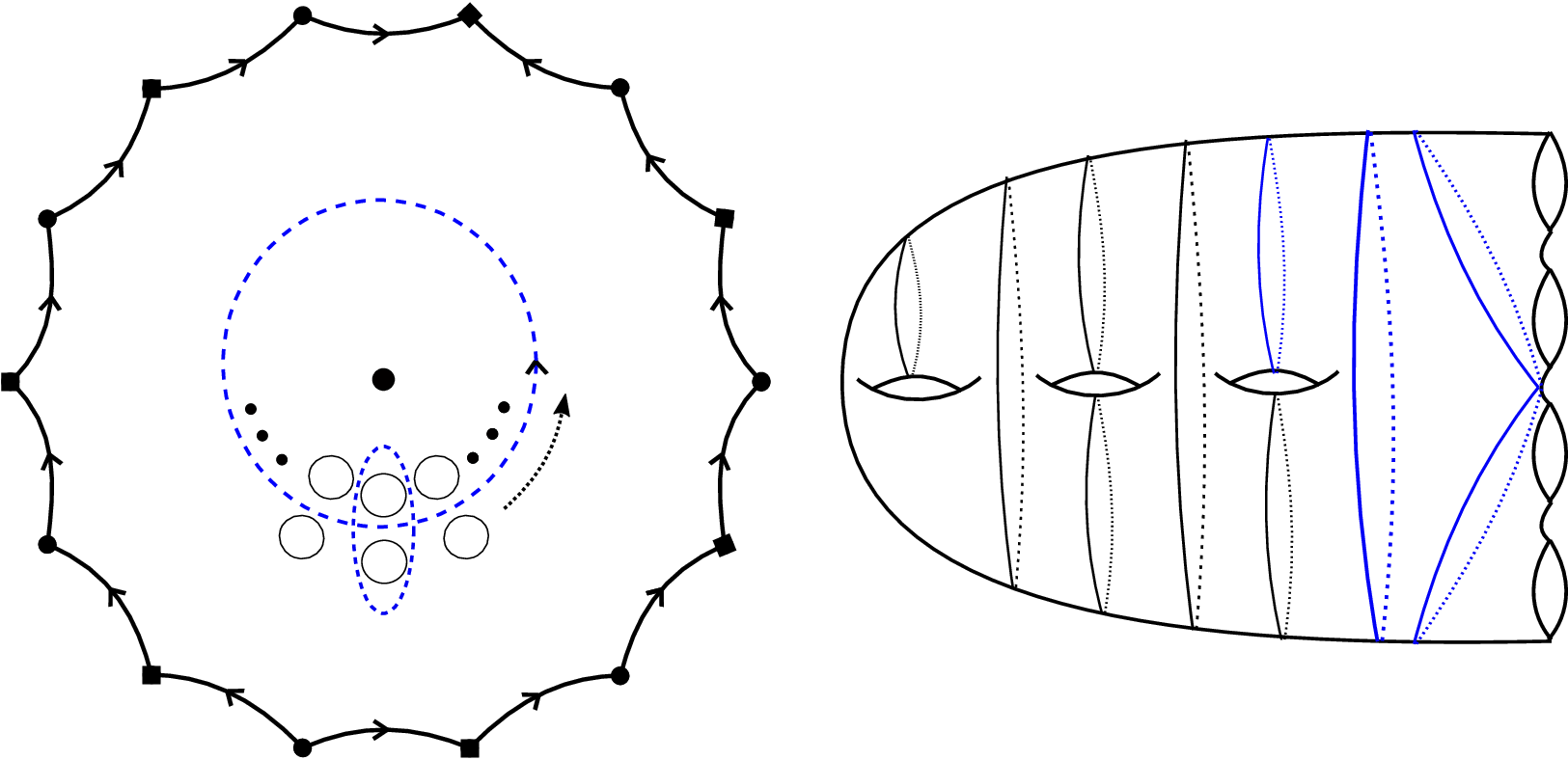}
\caption{Extension of a pants decomposition of $S_g$.}
\label{fig:c14_action_on_s3}
\end{figure}
\noindent Hence, $D$ has to induce an order $n$ rotation 
of the component $S'$ of $\overline{S_g \setminus \gamma_2}$ homeomorphic to $S_{0,k+1}$, which cyclically permutes its $k$ boundary components $\gamma_i'$ and fixes the $k+1$-th boundary component, namely, $\gamma_2$. This obviates the possibility of such an extension in this case, as $D\vert_{S'}$ can never induce an order $k$ permutation of the $\gamma_i$.

Furthermore, it is clear from the structure $\p_D$ that when $k$ is a proper divisor of $n$, then $\gamma_2$ cannot be left invariant by the action of $D$. Consequently, the action of $\hat{D}$ on the $\gamma_i$ is completely determined by the action of $D$ on $P$, and hence the result follows.
\end{proof}

\begin{remark}\label{rem:inj_radius}
Let $(X,\xi)$ be a closed hyperbolic surface with an isometry $D$ of finite order. Let $B_p(r)$ denote the closed disc of radius $r$ centered at any point $p \in X.$ If $D(p)=p$ and $D(B_p(r)) = B_p(r)$ such that $D|_{B_p(r)}$ becomes a rotation about $p$, then $r < \inj_p(X,\xi)$, where $\inj_p(X,\xi)$ denotes the injectivity radius of $(X,\xi)$ at $p$. This is immediate from the fact that the derivative of $D$ at $p$ is a rotation about the origin in $T_pX$, and the exponential map restricted to a normal ball is a radial isometry. 
\end{remark}
 
\noindent The following result describes the structures that realize compatibilities of type $\lp D_1,D_2,(r,s) \rp$.

\begin{corollary}\label{cor:comp_pair}
Let $D= \lp D_1,D_2,(r,s) \rp$, where the $D_i$ are spherical Type 1 actions. 
\begin{enumerate}[(i)]
\item If $(r,s) \neq (0,0)$, then
$$\Fix(\langle D \rangle) \approx \{[\p_{D_1}]\} \times \{[\p_{D_2}]\} \times (0,\ell(D)) \times \mathbb{R},$$
where $\ell(D)$ is a positive constant determined by $D$.
\item If $(r,s) = (0,0)$, then 
$$\Fix(\langle D \rangle)\approx \{[\p_{D_1}]\} \times \{[\p_{D_2}]\} \times \prod_{j=1}^3 \left( (0,\ell_j(D))\times \mathbb{R}\right),$$
where for each $j$, $\ell_j(D)$ is a positive constant determined by $D$.
\end{enumerate} 
\end{corollary}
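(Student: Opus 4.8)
The plan is to analyze a $D$-invariant hyperbolic structure by cutting $S_g$ along the $D$-orbit of curves realizing the compatibility and studying the two pieces separately. Write $m = n_{1,r} = n_{2,s}$ and let $k$ be the size of the compatible orbit, so that $k = n/m$ is a proper divisor of $n$ when $(r,s) \neq (0,0)$, while $k = n$ when $(r,s) = (0,0)$. By construction $S_g$ is obtained by gluing $Y_1 \cong S_{g_1,k}$ and $Y_2 \cong S_{g_2,k}$ along a $D$-orbit of $k$ curves $\gamma_1,\ldots,\gamma_k$, where each $Y_i$ carries the induced map $\hat{D}_i$ of the preceding discussion and $D$ cyclically permutes the $\gamma_j$. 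Given $\xi \in \Fix(\langle D \rangle)$, its restriction to $Y_i$ is a fixed point of $\hat{D}_{i,\#}$, so the first task is to describe $\Fix(\langle \hat{D}_{i,\#} \rangle)$.

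First I would treat part (i). Since $k$ is a proper divisor of $n$, Theorem~\ref{Thm:induced_action} guarantees that $\hat{D}_{i,\#}$ is completely determined by $D_{i,\#}$, and Proposition~\ref{thm:irr_type1} gives $\Fix(\langle D_{i,\#}\rangle) = \{[\p_{D_i}]\}$. Hence every non-boundary Fenchel--Nielsen coordinate on $Y_i$ is forced to its value in $\p_{D_i}$, while the $k$ boundary lengths are cyclically permuted and therefore equal; thus $\Fix(\langle \hat{D}_{i,\#}\rangle) \approx \{[\p_{D_i}]\} \times \mathbb{R}_+$, the last factor recording the common boundary length. Gluing $Y_1$ to $Y_2$ identifies these lengths (forcing a single common length $\ell$) and introduces a single twist $\theta \in \mathbb{R}$, again common across the orbit by $D$-equivariance. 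To bound $\ell$ I would invoke Remark~\ref{rem:inj_radius}: the curves $\gamma_j$ bound $D$-invariant collars around the orbit points of the rigid structures $\p_{D_i}$, whose radii---and hence the admissible lengths of the $\gamma_j$---are capped by the injectivity radii of $\p_{D_1}$ and $\p_{D_2}$. Taking $\ell(D)$ to be the smaller of the two bounds yields $\ell \in (0,\ell(D)]$, which establishes (i).

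For part (ii) the decisive difference is that $k = n$, so Theorem~\ref{Thm:induced_action} tells us that $\hat{D}_{i,\#}$ is \emph{not} determined by $D_{i,\#}$: the connecting region retains extra moduli, geometrically carried by the invariant curve $\sum_i \gamma_i'$ and the symmetric piece $S' \cong S_{0,n+1}$ appearing in that proof. The $3g_i - 3$ curves of $Y_i$ surviving to the closed surface are still pinned to $\p_{D_i}$ by Proposition~\ref{thm:irr_type1}, producing the factors $\{[\p_{D_1}]\}$ and $\{[\p_{D_2}]\}$, so the freedom resides entirely in the connecting region. To count it I would pass to the quotient orbifold $\mathcal{O}_D = S_g/\langle D\rangle$, using that a point of $\Fix(\langle D\rangle)$ is the same as a hyperbolic structure on $\mathcal{O}_D$ with the prescribed cone angles. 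Since $\lp D_1,D_2\rp$ retains all six cone points of $D_1$ and $D_2$ (in contrast with Lemma~\ref{lem:comp_pair}, where two are deleted), $\mathcal{O}_D$ is a sphere with six cone points; a pants decomposition of such an orbifold has $6-3 = 3$ interior curves, each contributing one length--twist pair, whereas for part (i) the four cone points of $\mathcal{O}_D$ give $4 - 3 = 1$ pair, consistent with the above (this count can also be checked directly against Remark~\ref{rem:fixD}). Each of the three lengths is again bounded above by the injectivity-radius argument of Remark~\ref{rem:inj_radius} applied to the three orbits of connecting curves, giving factors $(0,\ell_j(D)] \times \mathbb{R}$ for $j = 1,2,3$.

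The main obstacle is the sharp distinction between the two cases and, within it, the correct accounting of moduli in part (ii): everything hinges on the dichotomy of Theorem~\ref{Thm:induced_action}, namely determinacy of $\hat{D}_{i,\#}$ precisely when $k$ is a proper divisor of $n$, which is exactly what separates the single length--twist pair of (i) from the three pairs of (ii). Establishing the upper bounds $\ell(D)$ and $\ell_j(D)$ through the injectivity-radius constraint of Remark~\ref{rem:inj_radius}, and confirming that the twists range freely over $\mathbb{R}$ while the lengths remain capped, is the other delicate point.
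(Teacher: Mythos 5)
Your part (i) follows the paper's own route almost exactly: Theorem~\ref{Thm:induced_action} (with $k=n/m$ a proper divisor of $n$) pins the pieces, leaving one length and one twist parameter for the glued orbit, and Remark~\ref{rem:inj_radius} supplies the cap on the length. The one place you cut a corner is the cap itself: the paper does not bound the geodesic length of the $\gamma_j$ directly by the injectivity radii of $\p_{D_1},\p_{D_2}$; it removes invariant disks of radius $r\le r_D$ from the rigid surfaces, glues, and then \emph{uniformizes}, so that the circumference bound $2\pi\sinh(r_{M_i})$ is converted, through the conformal factor $e^{f}$, into the bound $\ell(D)$ on geodesic length in the resulting hyperbolic structure. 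Identifying the circumference of a removed disk in $\p_{D_i}$ with the geodesic length of $\gamma_j$ in the glued structure, as you do, skips exactly the step that the uniformization argument is there to handle.

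Part (ii) is where you take a genuinely different route, and where there is a real gap. The paper treats (ii) by ``a similar argument'' --- the same cut-and-glue analysis, with extra parameters appearing precisely because Theorem~\ref{Thm:induced_action} says $\hat{D}_{i,\#}$ is \emph{not} determined by $D_{i,\#}$ when $k=n$. You instead identify $\Fix(\langle D\rangle)$ with the Teichm\"uller space of the six-cone-point sphere $\mathcal{O}_D$ and count its three pants curves. The count of three length--twist pairs is right, but three things go wrong. First, your claim that the $3g_i-3$ curves of $Y_i$ are ``still pinned to $\p_{D_i}$ by Proposition~\ref{thm:irr_type1}'' is false in this case and contradicts your own preceding sentence: two of the three pants curves of $\mathcal{O}_D$ lift to invariant curves lying \emph{inside} the $Y_i$ (each cutting off the $S_{0,n+1}$-neighborhood of $\partial Y_i$), their lengths and twists vary, and the structure of the rest of $Y_i$ varies with them; in case (ii) the pieces are not rigid, and the factors $\{[\p_{D_i}]\}$ are only formal markers. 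Second, the identification $\Fix(\langle D\rangle)\approx\Teich(\mathcal{O}_D)$ together with its pants-curve dimension count is essentially Harvey's theorem, which the paper deduces (Corollary~\ref{cor:dim-branch-loci}) from the main theorem that is in turn built on the present corollary; within the paper's architecture your argument is circular. Third, that identification, taken at face value, lets each length coordinate range over all of $\mathbb{R}_+$, which conflicts with the bounded factors $(0,\ell_j(D)]$ you then assert; the bound cannot follow ``again'' from Remark~\ref{rem:inj_radius}, since the two invariant curves inside the $Y_i$ are not boundaries of removed disks, and establishing any cap would require rerunning the paper's remove--glue--uniformize construction for this case rather than citing the remark.
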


\begin{proof}
We will only prove (i), as (ii) will follow from a similar argument. By Proposition~\ref{Thm:induced_action}, it is apparent that the action induced by the $D_i$ on $S_{g_i,k}$ is completely determined by the action of $D_i$ on the $S_{g_i}$. So any structure that realizes 
$\lp D_1,D_2,(r,s) \rp$ as an isometry, is uniquely determined by the structures $\p_{D_i}$, and one additional length and twist parameter contributed by the isometric boundary components (cyclically permuted by the $D_i$) of $S_{g_i,k}$. 

Let $\ell$ denote the length of each boundary component of $S_{g_i,k}$.  It remains to show that $\ell < \ell(D)$,  where $\ell(D)$ is a positive constant determined by $D$. To see this, consider the unique hyperbolic surface $(X_i,\xi_{ih})$ (for $i=1,2$) realizing $D_i$ as an isometry. For each $i$, let $\{p_{ij}\}_{1\leq j\leq k} \subset X_i$ be the points in a distinguished compatible $D_i$-orbit of size $k$. Let $B_{ij}(r_i):=B_{p_{ij}}(r_i)$ denote mutually disjoint cyclically permuted disks under $D_i$.  Since $D_i^k(B_{{ij}}(r_i))=B_{{ij}}(r_i),$ it follows from Remark~\ref{rem:inj_radius} that $r_i < \inj_{p_{ij}}(X_i,\xi_{ih})$ (each $p_{ij}$ has the same injectivity radius). Thus the circumference $c_{ij}$ of each $B_{{ij}}(r_i)$ satisfies 
$$c_{ij} =2\pi \sinh (r_i) < 2\pi \sinh (\inj_{p_{ij}}(X_i,\xi_{ih})) =L_i \text{ (say)}.$$
 
Let $L= \min(L_1, L_2),$ and $r_D=\min(\inj_{p_{1j}}(X_1,\xi_{1h}), \inj_{p_{2j}}(X_2,\xi_{2h}))$. Removing $\{B_{ij}(r)\}_{1\leq j\leq k}$ (where $r < r_D$ and the circumference $c(r)$ of $B_{ij}(r)$ satisfies $c(r)< L$) from each $X_i$, and gluing the surfaces $\overline{X_i \setminus \cup_{j} B_{p_{ij}}(r)}$ along their boundary components, we obtain a diffeomorphic copy $X$ of $S_{g_1+g_2+k-1}$ with a $C_n$ action $D$, and a reduction system $\C$ consisting of $k$ nonseparating curves. 
Moreover, $X$ admits a canonical Riemannian metric $\xi$ realizing $D$ as an isometry with each curve of $\C$ having length $c(r)$. By the uniformization theorem, there is a unique hyperbolic metric $\xi_h = e^{f}\xi$ on $X$, also realizing $D$ as an isometry, where $f=f(\xi_1, \xi_2)$ is a smooth real valued function on $X$. The result (i) now follows from the observation that under $\xi_h$, each curve of $\C$ has length $\ell_h= \ell_h(c(r),f) < \ell(D)$ where $\ell(D)= \ell(L, f)$ is a unique constant (as $L,f$ are uniquely determined by $D$).
\end{proof}

\noindent Considering the similarities between the compatibilities $\l D', (r,s) \r$ and \linebreak $\lp D_1,D_2, (r,s) \rp$, it is quite evident that the structures that realize $\l D', (r,s) \r$ should also arise analogously, and so we have the following.
\begin{corollary} \label{cor: self_comp_pair}
Let $D = \l D', (r,s) \r$ be an action of order $n$ on $S_g$. Then, 
$$\Fix(\langle D \rangle) \approx \Fix (\langle D' \rangle) \times (0,\ell(D')) \times \mathbb{R},$$
where $\ell(D')$ is a positive constant determined by $D'$.
\end{corollary}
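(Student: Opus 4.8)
The plan is to reduce the \emph{self}-compatibility case $D = \l D', (r,s) \r$ to the already-established pairwise compatibility case $\lp D_1, D_2, (r,s) \rp$ of Corollary~\ref{cor:comp_pair}, exploiting the fact that both constructions are realized by the identical local operation: deleting cyclically permuted disks around a compatible orbit of size $k = n/n_r$ and regluing along the resulting boundary components. The essential point is that whether the two compatible orbits live on two distinct surfaces or on a single surface carrying the action $D'$, the contribution to $\Fix(\langle D \rangle)$ coming from the gluing is purely local: it consists of exactly one length parameter and one twist parameter per glued pair of $k$-orbits, subject to an upper bound on the common boundary length.

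First I would invoke Theorem~\ref{Thm:induced_action}, applied to the (not necessarily spherical Type 1, but irreducible-on-its-beads) action $D'$, to conclude that since $k = n/n_r$ is a \emph{proper} divisor of $n$, the induced action $\hat{D}'_{\#}$ on $\Teich(S_{g',k})$ is completely determined by $D'_{\#}$ on $\Teich(S_{g'})$, and in particular is not forced to introduce a free permutation component along the new boundary curves. This guarantees that any structure realizing $D = \l D', (r,s)\r$ as an isometry is determined by a structure realizing $D'$ together with the single pair of new length/twist parameters $(\ell, \theta)$ contributed by the reglued boundary curves, giving the product factor $\Fix(\langle D'\rangle) \times \{(\ell,\theta)\}$ with $\theta \in \mathbb{R}$ unconstrained.

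Next I would establish the constraint $\ell \le \ell(D')$ by the injectivity-radius argument of Corollary~\ref{cor:comp_pair}. Fix a hyperbolic surface $(X, \xi)$ realizing $D'$ as an isometry, and let $\{p_j\}_{1 \le j \le k}$ be the distinguished compatible $D'$-orbit of size $k$, with mutually disjoint cyclically permuted disks $B_{p_j}(r)$. Since $D'^{k}(B_{p_j}(r)) = B_{p_j}(r)$, Remark~\ref{rem:inj_radius} bounds $r \le r_M = \sup_{p} r_\xi(p)$, so the common circumference $c(r) = 2\pi \sinh(r)$ is bounded above by $L := 2\pi \sinh(r_M)$. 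Deleting these disks and regluing the $k$ boundary components in the self-compatible pattern (identifying each orbit point's disk-boundary with its partner under the $(r,s)$-pairing) produces a surface $X'$ of genus $g(D') + k$ carrying $D$ as an isometry, with a reduction system $\C$ of $k$ nonseparating curves of length $c(r)$; uniformizing via $\xi_h = e^f \xi$ yields the unique hyperbolic length $\ell_h \le \ell(D')$, where $\ell(D') = \ell(L, f)$ depends only on $D'$. This produces the claimed half-open interval factor $(0, \ell(D')]$.

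The main obstacle I anticipate is verifying that the self-compatible gluing does not create any \emph{additional} constraint on $\Fix(\langle D\rangle)$ beyond that already present in $\Fix(\langle D'\rangle)$ and the single new $(\ell, \theta)$ pair — in other words, that identifying two orbits \emph{within the same surface} rather than across two surfaces does not couple the new twist parameter to the pre-existing coordinates of $\Fix(\langle D'\rangle)$. The resolution is that the curves of $\C$ are nonseparating and the orbit disks $B_{p_j}(r)$ are chosen mutually disjoint, so the Fenchel–Nielsen coordinates associated to $\C$ are genuinely independent of the coordinates realizing $D'$; the equivariance forced by Theorem~\ref{Thm:induced_action} (applied to $D'$ at the proper divisor $k$) ensures the twist along $\C$ is a single free $\mathbb{R}$-parameter rather than a constrained family. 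Once this independence is secured, the product decomposition $\Fix(\langle D\rangle) \approx \Fix(\langle D'\rangle) \times (0, \ell(D')] \times \mathbb{R}$ follows exactly as in Corollary~\ref{cor:comp_pair}, and I would simply remark that the argument is verbatim that of the pairwise case with $D_1, D_2$ replaced by the single surface carrying $D'$.
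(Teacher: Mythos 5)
Your proposal is correct and takes essentially the same approach as the paper: the paper gives no separate proof of this corollary, remarking only that, given the similarities between $\l D', (r,s) \r$ and $\lp D_1,D_2,(r,s) \rp$, the realizing structures arise analogously to those in Corollary~\ref{cor:comp_pair}. Your write-up is precisely that analogy made explicit---the injectivity-radius bound of Remark~\ref{rem:inj_radius} producing the factor $(0,\ell(D')]$, the free twist producing $\mathbb{R}$, and Theorem~\ref{Thm:induced_action} at the proper divisor $k=n/n_r$ ruling out extra parameters---so it is, if anything, more detailed than the paper's own treatment.
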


\subsection{Compatibilities of type $\l D, g_0\r$ and $\overline{\l D, g_0\r}$} Let $D$ be an action of order $n$ on $S_g$. As we saw earlier, an action of type $\l D, g_0\r$ is realized by pasting a permutation component (that cyclically permutes $n$ isometric copies of $S_{g_0,1}$) to the action $D$. As we saw earlier, the action $\l D, g_0\r$ can also be realized iteratively from $g_0$ compatibilities of type $\l D,1 \r$. Besides, the arguments in Theorem~\ref{Thm:induced_action} would imply that each copy of $S_{1,1}$ (that is attached in a $\l D,1 \r$ type construction) contributes $2$ additional length parameters, and $1$ twist parameter. Furthermore, following the arguments in Corollary~\ref{cor:comp_pair}, we can show that one of the length parameters (contributed by $\partial (S_{1,1,})$ is bounded by a positive constant that is determined uniquely by the action on which the permutation component is pasted. Hence, when the compatibility $\l D, g_0 \r$ is completed, a total of $3g_0-1$ length and twist parameters would have been added to the dimension of $\Fix(\langle D \rangle)$, and so we have the following result. 

\begin{corollary}
\label{cor:perm_comp}
Let $D$ be a cyclic action of order $n$ on $S_g$. Suppose that the actions $\l D, g_0 \r$ and $\overline{ \l D, g_1 \r}$ are well-defined, for some $g_0,g_1 \geq 1$. Then 
\begin{enumerate}[(i)]
\item $\displaystyle \Fix(\langle \l D, g_0 \r \rangle)  \approx \Fix(\langle D \rangle ) \times \prod_{i=1}^{g_0}((0, \ell_i^0(D)) \times \mathbb{R}) \times \prod_{i=1}^{2g_0 - 1} (\mathbb{R}_+  \times \mathbb{R}),$ where each $\ell_i^0(D)$ is a positive constant determined by the action $\l D, g_0 \r$.
\item $\displaystyle \Fix(\langle \overline{\l D , g_1 \r} \rangle)  \approx \Fix (\langle D \rangle ) \Large{/}\left(\prod_{i=1}^{g_1}((0, \ell_i^1(D)) \times \mathbb{R}) \times \prod_{i=1}^{2g_1 - 1} (\mathbb{R}_+  \times \mathbb{R})\right),$ where each $\ell_i^1(D)$ is a positive constant determined by the action $\overline{\l D, g_1 \r}$.
\end{enumerate}
\end{corollary}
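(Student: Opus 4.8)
The plan is to reduce everything to a single toral addition $\l D, 1 \r$ and then iterate. As recorded just before the corollary, $\l D, g_0 \r$ is built from $g_0$ successive toral additions $\l \cdot, 1 \r$, and $\overline{\l D, g_1 \r}$ is precisely the inverse of such a sequence. Hence it suffices to describe how $\Fix(\langle D \rangle)$ changes under one toral addition; part (i) then follows by multiplying the resulting factors over the $g_0$ additions, and part (ii) follows formally by reading the same decomposition backwards, i.e. by dividing out the factors contributed by the $g_1$ handles that the subtraction removes. The notation $\Fix(\langle \overline{\l D, g_1 \r} \rangle) \approx \Fix(\langle D\rangle)/(\cdots)$ should be interpreted in exactly this sense.

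For one toral addition I would fix a $D$-invariant pants decomposition $P$ of $S_g$ whose curves carry the Fenchel--Nielsen coordinates of $\Fix(\langle D\rangle)$, chosen so that the distinguished $D$-orbit of size $n$ sits inside the pairs of pants of $P$. Deleting the $n$ cyclically permuted disks opens the pairs of pants of $P$ meeting the orbit into holed spheres, which must be re-decomposed by $n$ new neck curves in total; gluing in each copy of $S_{1,1}$ then adds its boundary (gluing) curve $\delta$ and the interior curve of the torus, giving $3n$ new curves organized into three $D$-orbits of size $n$. Passing to $D$-invariant structures forces all $n$ members of each orbit to carry identical length and twist data, so the new moduli are parametrized by one representative from each family, namely three length--twist pairs.

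The decisive input is Theorem~\ref{Thm:induced_action} applied with $k = n$: because the orbit has size exactly $n$ rather than a proper divisor, $\hat{D}_{\#}$ is \emph{not} determined by $D_{\#}$, which is exactly what certifies that these three new length--twist pairs are genuine free moduli and not pinned down by the structure on the $S_g$-part. This is the precise feature separating toral additions from the proper-divisor compatibilities treated in Corollary~\ref{cor:comp_pair}. I would then bound a single one of the three new lengths --- that of the gluing curve $\delta = \partial S_{1,1}$ --- by the injectivity-radius argument of Corollary~\ref{cor:comp_pair}: the invariant disk $B_p(r)$ satisfies $D^n(B_p(r)) = B_p(r)$ with $D^n|_{B_p(r)}$ a rotation, so Remark~\ref{rem:inj_radius} caps its radius by $r_M = \sup_p r_\xi(p)$ and its circumference by $2\pi \sinh(r_M)$; uniformizing the glued metric then yields a geodesic length $\ell_\delta \leq \ell(D)$ for a positive constant determined by the action, while the neck and torus-core lengths range over $\mathbb{R}_+$ and all three twists over $\mathbb{R}$. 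One toral addition therefore contributes one factor $(0,\ell(D)] \times \mathbb{R}$ together with two factors $\mathbb{R}_+ \times \mathbb{R}$, so that the dimension grows by $6$ --- in agreement with Harvey's formula, since $\l D, 1 \r$ raises the orbifold genus by one while fixing the number of cone points. Iterating over the $g_0$ additions yields part (i), and inverting yields part (ii).

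The step I expect to be the main obstacle is globalizing the length bound. In Corollary~\ref{cor:comp_pair} the relevant base is a single point $\{\p_{D_i}\}$, so $r_M$, and hence the cap $\ell(D)$, is a literal constant; here $\Fix(\langle D\rangle)$ is positive-dimensional, so the admissible upper bound $\ell_\delta \leq \ell(\xi)$ varies continuously with $\xi \in \Fix(\langle D\rangle)$. The delicate point will be to show that the region $\{(\xi,\ell) : 0 < \ell \leq \ell(\xi)\}$ is homeomorphic to the honest product $\Fix(\langle D\rangle) \times (0,\ell(D)]$ --- a fiberwise rescaling argument --- and, again via Theorem~\ref{Thm:induced_action}, that no additional relations are imposed on the remaining coordinates, i.e. that every admissible choice of the two unbounded length--twist pairs is realized by an honest $D$-invariant hyperbolic structure on the enlarged surface.
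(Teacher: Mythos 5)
Your proposal follows essentially the same route as the paper's own proof (the paragraph preceding the corollary): reduce $\l D, g_0 \r$ to $g_0$ iterated compatibilities of type $\l D, 1 \r$, invoke the arguments of Theorem~\ref{Thm:induced_action} in the case $k=n$ to certify that the parameters created by the cyclically permuted copies of $S_{1,1}$ are genuinely new moduli, bound the length of the gluing curve $\partial S_{1,1}$ by the injectivity-radius argument of Remark~\ref{rem:inj_radius} and Corollary~\ref{cor:comp_pair}, and treat (ii) as the formal reversal of (i). Even the subtlety you flag at the end --- that the length bound is a literal constant only when the base $\Fix(\langle D \rangle)$ is a point, and otherwise varies over the base --- is passed over in silence by the paper, so on that point you are being more careful than the source.

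However, your bookkeeping and the stated corollary disagree, and this should not be glossed over. You find that each toral addition contributes three new orbits of pants curves (the neck curve, the gluing curve, and the torus core), hence three length--twist pairs with exactly one bounded length, so that iteration yields
$$\Fix(\langle \l D, g_0 \r \rangle) \approx \Fix(\langle D \rangle) \times \prod_{i=1}^{g_0}\left((0,\ell_i^0(D)]\times\mathbb{R}\right) \times \prod_{i=1}^{2g_0}\left(\mathbb{R}_+\times\mathbb{R}\right),$$
which carries one more $\mathbb{R}_+\times\mathbb{R}$ factor than the corollary asserts: a dimension increase of $6g_0$ rather than $6g_0-2$. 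Your count is the correct one, and it is forced by Harvey's formula (the paper's own Corollary~\ref{cor:dim-branch-loci}): a toral addition raises $g_0(D)$ by one and leaves the number of cone points unchanged, so $\dim(\Fix(\langle D \rangle))$ must grow by exactly $6$ per addition, whereas the printed formula grows it by only $4$ when $g_0=1$. The source of the slip is visible in the paper's proof paragraph, which credits each attached $S_{1,1}$ with only ``two length parameters and one twist parameter'' --- the intrinsic parameters of the holed torus --- thereby omitting the twist along the gluing curve and the length--twist pair of the neck curve that re-decomposes the pair of pants from which the disk was deleted. (The same undercount infects Theorem~\ref{thm: main}: for a single spherical Type 1 bead with one toral addition, its displayed $M_1$ has dimension $2$, while both the dimension formula stated after it and Harvey's formula give $6$.) So your argument is sound and methodologically identical to the paper's, but what it proves is a corrected version of the statement rather than the statement as printed; any write-up should acknowledge the mismatch in the number of $\mathbb{R}_+\times\mathbb{R}$ factors and resolve it in favor of your count.
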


\subsection{Structures that realize arbitrary actions}
\label{sec:main}
In this subsection, we will piece together the structures detailed in the Section~\ref{sec:str_real_comps} (that realize various kinds of compatibilities) to describe the structures that will realize arbitrary cyclic actions of order $3$ and above. We recall that for such a cyclic action $D$, there exists a necklace 
\[\tag{*} \N = ((D_1,\ldots,D_k); ((x_1,y_1), \ldots, (x_m,y_m)); (g',g''))\]
as in Definition~\ref{def: compatibility-gen}, such that $D_{\N} = D$ (see Proposition~\ref{thm:arb_cyc_real}). Putting together the results in Proposition \ref{Thm:induced_action} and Corollaries \ref{cor:comp_pair} - \ref{cor:perm_comp}, we obtain an explicit decomposition of $\Fix(\langle D\rangle)$ as a product of two-dimensional strips some of which are of bounded width.
\begin{theorem}\label{thm:main}
Let $D$ be a cyclic action  of order $n \geq 3$ on $S_g$, and let $\N$ be a necklace as in $(*)$ such that $D_{\N} = D$. Then $\Fix(\langle D \rangle) \approx M_1/M_2$, where
{ $$ M_1=\prod_{i=1}^{k}\{\p_{D_i}\} \times \prod_{i=1}^{g'+k+2f(T_{\N})+m-2}((0, \ell_i'(D)) \times \mathbb{R}) \times \prod_{i=1}^{2g' - 1} (\mathbb{R}_+  \times \mathbb{R})$$}
and 
$$M_2= \prod_{i=1}^{g''}((0, \ell_i''(D)) \times \mathbb{R}) \times \prod_{i=1}^{2g'' - 1} (\mathbb{R}_+  \times \mathbb{R}),$$
where the $\ell_j'(D)$ and $\ell_j''(D)$ are positive constants determined by $D$. Consequently, 
$$\dim(\Fix(\langle D \rangle) = 6(g'-g'')+2k+4f(T_{\N})+2m-2.$$ 
\end{theorem}

\noindent Since the dimension of manifold $M_2$ (in Theorem~\ref{thm:main}) is determined by the deletion of permutation components in a given realization of $D$, it is crucial to note that it is in general nonempty even when $D$ is irreducible. In fact, it was shown in \cite{PRS} that the deletion of at least one permutation component is required to realize an irreducible Type 2 action.

\begin{example}
For the necklace structure realizing the action $D$ in Example~\ref{eg:generic_necklace}, we see that $k=6$, $m=4$, $f(T_{\N}) =1$, and $(g',g'') = (0,3)$. Consequently, applying Theorem~\ref{thm:main}, we have $\text{Fix}(\langle D \rangle) \approx M_1/M_2$, where 
$$M_1 = \left( \prod_{i=1}^{10} (0, \ell'_i(D)) \right)\times \mathbb{R}^{10} \text{ and } M_2 = \left( \prod_{i=1}^3 (0, \ell''_i(D)) \times \mathbb{R} \right) \times \mathbb{R}_+^5 \times \mathbb{R}^5,$$
and so we have $\dim(\text{Fix}(\langle D \rangle)) = 20-16=4$.
\end{example} 

\section{Applications to the geometry of $\Fix(H)$}
\label{sec:injrad-Fix}
Let $H= \langle D_1, \ldots, D_s \rangle < \Mod(S_g)$ be a finite subgroup, with $n(D_i)\geq 3$, for  at least one $i,$ $1\leq i \leq s$. In this section, we discuss a few applications of Theorem~\ref{thm:main} that help us understand some finer geometric properties of $\Fix(H)$ as a submanifold of $\Teich(S_g)$. 

\subsection{An upper bound of the injectivity radii of structures in $\Fix(H)$}
In this subsection, we begin by deriving an upper bound for the systole (defined below) of hyperbolic structures in $\Fix(H)$, when $H = \langle D\rangle$ represents a cyclic action on $S_g$ of order at least $3$, in terms of the injectivity radii of the irreducible components of $\langle D \rangle$. This, in turn, yields a bound on the injectivity radii of structures in $\Fix(\langle D \rangle)$ and consequently a bound on the injectivity radii of structures in $\Fix(H)$ for any finite subgroup $H$ of $\Mod(S_g)$.

\noindent Let $\inj(X)$ denote the injectivity radius of a Riemannian manifold $X$. 
The \textit{systole} (also known as the \textit{systole length}) of a compact hyperbolic surface $(X,\xi)$, denoted by $\sys(X)$, refers to the length of any of the shortest closed geodesic(s) of $X$. In particular, when $X$ is closed, $\sys(X)=2\inj(X)$, where $\inj(X)$ denotes the injectivity radius of $(X,\xi)$. First, we provide estimates for the injectivity radii of the structures of type $\p_D$ realizing spherical Type 1 actions. For this, we will use the following lemma that applies basic hyperbolic trigonometry. 

\begin{lemma}\label{lem:alt}
Let $ABC$ be a  hyperbolic triangle and let $D$ denote the foot of perpendicular from vertex $A$ to side $BC.$ Then the length of the altitude $AD$ is given by:
$$ \sinh^{-1}{\left( \frac{\sqrt{\cos^2{A}+\cos^2{B}+\cos^2{C}-1+2\cos{A}\cos{B}\cos{C}}}{\sin{A}}\right)} $$
\end{lemma}

\noindent The following lemma provides estimates for injectivity radii at the nontrivial orbit points of a spherical Type 1 action in terms of its data set.

\begin{proposition}\label{prop:inj_spherical_type1}
Let $D=(n,0;(c_1,n_1),(c_2,n_2), (c_3,n_3))$, where $n_3 = n$, be a spherical Type 1 action in $\Mod(S_g)$ realized by a structure $\mathcal{P}_D$ as in Theorem~\ref{res:1}. Let $\inj_P(\p_D)$ be the injectivity radius afforded by the structure $\mathcal{P}_D$ at a point $P \in S_g$.
\begin{enumerate}[(i)]
\item Let $P_0$ be the center of $\p_D$. 
\begin{enumerate}
\item If $n_1,n_2 \neq 2$, then 
$$\inj_{P_0}(\p_D) = \sinh^{-1}{\left( \frac{\sqrt{\displaystyle\sum_{i=1}^3 \cos^2{(\pi/n_i)}-1+2\prod_{i=1}^3\cos{(\pi/n_i)}}}{\sin{(\pi/n)}}\right)}.$$ 
\item If $n_2  =2$, then $\inj_{P_0}(\p_D)$ equals $$\displaystyle \sinh^{-1}{\left( \frac{\sqrt{\cos^2{(2\pi/n)}+2\cos^2{(\pi/n_1)}-1+2 \cos{(2\pi/n)}\cos^2{(\pi/n_1)}}}{\sin{(2\pi/n)}}\right)}.$$
\end{enumerate}
\item Let $P$ be any non-center point which belongs to a $\langle D \rangle$-orbit corresponding to a nontrivial cone point of $\O_D$. Then $\inj_P(\p_D) < \ell(\p_D)$, where $\ell(\p_D)$ denotes the length of an edge of $\mathcal{P}_D$ given by 
$$
\ell(\p_D) = \begin{cases}
 \cosh^{-1} {\left( \frac{\cos^2 (\pi/n_1)+\cos (2\pi/n)}{\sin^2 (\pi/n_1)} \right)}, & \text{if } k(D) = n, \text{ and} \\
\cosh^{-1} {\left( \frac{\cos (\pi/n_1)\cos (\pi/n_2)+\cos (\pi/n)}{\sin (\pi/n_1) \sin (\pi/n_2)} \right)}, & \text{if } k(D)=2n.
\end{cases}$$
\end{enumerate}
\end{proposition}

\begin{proof}
\begin{enumerate}[(i)]
\item By definition, $\inj_P(\p_D)$ is the radius of the biggest hyperbolic disc centered at $P$ that can be isometrically embedded in the polygon $\mathcal{P}_D$. Thus, $\inj_P(\p_D)$ turns out to be the (hyperbolic) inradius of $\mathcal{P}_D$.
 
Here two cases arise depending upon whether $k(D)=n$ or $k(D)=2n.$ We will argue for the case when $k(D)=2n$ as the argument for the other case is similar. Using the geometric realization of $D$ as $\mathcal{P}_D,$ one can see that $\mathcal{P}_D$ can be divided into $2n$ congruent triangles with interior angles $\pi/n$, $\pi/n_1$, and $\pi/n_2,$ sharing a common vertex and interior angle $\pi/n$. Hence, the inradius of $\mathcal{P}_D$ is given by the altitude of any one of these $2n$ hyperbolic triangles joining the vertex with interior angle $\pi/n$ to the opposite side with interior angles $\pi/n_1$ and $\pi/n_2.$ Thus, our assertion now follows from Lemma \ref{lem:alt}.
\item We break our argument into the following two cases.
\begin{enumerate}[\text{Case} 1.]
\item $k(D)=n$ (Without loss of generality, let $n_2=2$).\\
In this case, $\mathcal{P}_D$ is a regular $n$-gon with side length $\ell(\p_D)$ where each vertex and the mid point of each side belong to respective orbits of sizes $n/n_1$ and $n/2$. By Lemma \ref{lem:alt}, we have $\ell(\p_D)= \cosh^{-1} {\left( \frac{\cos^2 (\pi/n_1)+\cos (2\pi/n)}{\sin^2 (\pi/n_1)} \right)}.$
Consider two consecutive vertices $P_1,P_2$ and the mid point $P_{12}$ of the side $P_1P_2$ with respective injectivity radii $r_1$ and $r'.$ From the symmetries of the polygon and the side pairings, it follows directly that radius of the biggest normal ball centered at either of $P_1$ and $P_{12}$ must be strictly bounded above by the side length $\ell(\p_D)$. 

\item $k(D)=2n$ (i.e. $n_1,n_2\geq 3$).\\ 
In this case, $\mathcal{P}_D$ is a hyperbolic $2n$-gon with each side of length $\ell(\p_D)$. By Lemma \ref{lem:alt}, we get \small{$$\ell(\p_D)= \cosh^{-1} {\left( \frac{\cos (\pi/n_1)\cos (\pi/n_2)+\cos (\pi/n)}{\sin (\pi/n_1) \sin (\pi/n_2)} \right)}.$$} Let $P_1,P_2$ be two consecutive vertices with respective injectivity radii $r_1,r_2$, belonging to respective orbits of sizes $n/n_1$ and $n/n_2$. A similar argument as in the previous case, would imply that both $r_1,r_2 <\ell(\p_D)$.

\end{enumerate} 
\end{enumerate}
\end{proof}

\begin{remark}
\label{rem:inj_bound}
It can be easily verified that the maximum injectivity radius of a structure of type $\p_D$ realizing a spherical Type 1 action $D$ must be bounded above by one of the upper bounds mentioned in Proposition \ref{prop:inj_spherical_type1}. In fact, from the symmetries of the polygon, it follows that the maximum radius of a normal ball at any point in the interior of the polygon $\p_D$ must be smaller than the inradius of $\p_D$. Similarly, for any point belonging to the interior of an edge of $\p_D$, the maximum radius of a normal ball must be strictly bounded above by the length of the edge.
\end{remark}

\noindent In view of Proposition~\ref{prop:inj_spherical_type1} and Remark~\ref{rem:inj_bound}, we obtain the following upper bound for the maximum injectivity radius of a structure of type $\p_D$.

\begin{corollary}
\label{cor:bound_inj_pD}
Let $D \in \Mod(S_g)$ be a spherical Type 1 action realized by a structure $\mathcal{P}_D$ as in Theorem~\ref{res:1}. Then
$$M(\inj(\p_D)) \leq \U(\p_D),$$ where $$M(\inj(\p_D)):=\max_{P\in S_g} \{\inj_P(\p_D)\}$$ is the maximum injectivity radius of $S_g$ realized by the structure $\p_D$ and 
$\U(\p_D) := \max\{\inj_{P_0}(\p_D), \ell(\p_D)\}.$
\end{corollary}

\begin{remark}
\label{rem:inj_bound_rscomp}
Let $D \in \Mod(S_g)$ be such that $D= \lp D_1,D_2,(r,s) \rp$, where the $D_i$ are spherical Type 1 actions. Note that a hyperbolic structure that realizes $D$ is obtained by identifying the boundaries of $k$ cyclically permuted isometric discs of radius $R$ across a pair of compatible orbits of size $k$ belonging to the unique geometric realizations of $D_1$ and $D_2$, respectively. The identified boundaries become a family of $k$ closed geodesics in $S_g$ of length $\ell =2\pi \sinh R $ with $R<\min \{R_1,R_2\}$, where $R_i$ denotes the injectivity radius of each point in the corresponding orbit of $D_i$ (using Corollary \ref{cor:comp_pair}). By definition, the systole of $D$, $\sys(D) \leq \ell =2\pi \sinh R$. 

More generally, let $D \in \Mod(S_g)$ such that $D= \lp D_1,D_2,(r,s) \rp$, where one of the $D_i$ is a spherical Type 1 action. Without loss of generality, let us assume $D_1$ to be a spherical Type 1 action. By a similar argument as above, it follows that $\sys(D) \leq \ell =2\pi \sinh R_1$ where $R_1$ is the injectivity radius of any point in the compatible orbit of $D_1$.
\end{remark}

\noindent By Corollary~\ref{cor:bound_inj_pD}, Remark~\ref{rem:inj_bound_rscomp},  and the fact that $\inj(D)=\frac{1}{2} \sys(D)$, we have the following result. 

\begin{corollary} \label{prop:systole1}
Let $D \in \Mod(S_g)$ be such that $D= \lp D_1,D_2,(r,s) \rp$, where the $D_i$ are spherical Type 1 actions.  Then for any $X \in \Fix(\langle D \rangle)$, we have
$$\inj(X) \leq \pi \min\{\sinh(\U(\p_{D_1})), \sinh(\U(\p_{D_2}))\}.$$
\end{corollary}

\begin{remark}
\label{rem:inj_rad_bound}
Let $D$ be a cyclic action on $S_g$ with $n(D) \geq 3$. By (the proof of) Theorem~\ref{thm:arb_cyc_real}, there exists a necklace 
\[\N = ((D_1,\ldots,D_k); ((x_1,y_1), \ldots, (x_m,y_m)); (g',g''))\]
such that $D_{\N} = D$. If $\N$ is just a linear chain (as in Definition \ref{def: compatibility-gen}), then for any hyperbolic structure $X \in \Fix(\langle D \rangle)$ realized by $\N$, an inductive application of Corollary \ref{prop:systole1} shows that $$\sys(X) \leq \U(\N),$$ and thus $$\inj(X) \leq \frac{1}{2}\U(\N),$$ where $$\U(\N):= 2\pi\min\{\sinh(\U(\p_{D_1})), \ldots, \sinh(\U(\p_{D_k})).$$ Moreover, if $\N$ also includes self-compatibilities and the additions (and deletions) of permutation components, Remarks~\ref{rem:inj_bound} and~\ref{rem:inj_bound_rscomp} would guarantee that $\frac{1}{2}\U(\N)$ continues to be an upper bound for $\inj(X)$. 



\end{remark}


\begin{theorem} \label{thm:systole1}
Let $D$ be a cyclic action on $S_g$ with $n(D) \geq 3$, and let $\N$ be a necklace as in Theorem~\ref{thm:main} such that $D_{\N} = D$. Then for any $X \in \Fix(\langle D \rangle)$ realized by $\N$, we have 
$$\sys(X) \leq  \U(\N),$$ and consequently, 
$$\inj(X) \leq \frac{1}{2}\U(\N).$$
\end{theorem}

\noindent The following example illustrates a certain optimality of the bounds obtained in Theorem~\ref{thm:systole1}.

\begin{example}
\label{eg:inj_bd_real1}
For a prime $p \geq 5$, consider the $C_p$ action $D=(p, 0; (p-4,p),(2,p),
(1,p),(1,p))$ on $S_{p-1}$. The action $D$ can be realized as $D_{\N}$, where 
$\N = ((D_1,D_2);;)$ with $\mathfrak{C}(\N) = (((1,1));)$, $$D_1=(p,0;(3,p),
(p-4,p),(1,p)), \text{ and } D_2=(p,0;(p-3,p),(2,p),(1,p)).$$ It follows from 
Proposition \ref{prop:inj_spherical_type1} that $\inj_{P_1}(\mathcal{P}_{D_1})=\inj_{P_2}(\mathcal{P}_{D_2})=R$, where $P_1,P_2$ denote the compatible 
fixed points of $D_1$ and $D_2$ respectively. Since the maximum radius of any 
disk centered at $P_i$ is strictly bounded above by 
$R$, for any structure $X \in \Fix(\langle D \rangle)$,  $\inj(X) < \pi \sinh 
R$ (Corollary \ref{prop:systole1}). However, the upper bound $R$ is optimal in 
the sense that for each $\epsilon>0,$ there exist a point $P\in S_{p-1}$ and a 
structure $X \in \Fix(\langle D \rangle)$ such that the injectivity radius $
\inj_P(X)$ at the point $P$ is $>R-\epsilon.$
\end{example}


Let $\N(D)$ be the set of all necklaces realizing an arbitrary cyclic action $D$, that is
$$\N(D) : = \{\N : D_{\N} = D\}.$$
Clearly, $\N(D)$ is finite as $\Fix(\langle D \rangle)$ is finite-dimensional. We define 
$$\U(D) := \max \{\U(\N): \N \in \N(D)\}.$$
Now, let $H < \Mod(S_g)$ be a finite subgroup such that $H = \langle D_1, \ldots, D_s \rangle$. Then a simple argument would show that $\Fix(H) = \cap_{i=1}^s \Fix(\langle D_i \rangle)$. Define 
$$\U(H) := \min_{1 \leq i \leq s} \U(D_i).$$ As an immediate consequence of Theorem~\ref{thm:systole1}, we obtain the following.

\begin{corollary}
\label{cor:inj_global_bound}
Let $H= \langle D_1, \ldots, D_s \rangle < \Mod(S_g)$ be a finite subgroup such that $n(D_i) \geq 3$ for some $1 \leq i \leq s$. Then for any $X \in \Fix(H)$, we have 
$$\sys(X) \leq  \U(H),$$ and consequently, 
$$\inj(X) \leq \frac{1}{2}\U(H).$$
\end{corollary}  

 Given a closed hyperbolic surface of genus $g$, ($g \geq 2$), the \textit{systole function} denoted by $\sys \colon \Teich(S_g) \to \mathbb{R}_{+}$, is defined by
$\sys \colon (X,\xi) \mapsto \sys(X).$ Finally, as a consequence of Corollary~\ref{cor:inj_global_bound}, we obtain a global upper bound for the systole function restricted to the submanifold $\Fix(H)$ of $\Teich(S_g)$. 

\begin{corollary}\label{cor:systole2}
Let $H < \Mod(S_g)$ be a finite subgroup as in Corollary \ref{cor:inj_global_bound}. Then the restriction $\sys:\Fix(H) \to \mathbb{R}_{+}$ of the systole function is bounded above by $ \U(H)$.
\end{corollary}

\subsection{$\Fix(H)$ as a symplectic submanifold of \textbf{$\Teich(S_g)$}}
As $(\Teich(S_g),\linebreak\mathfrak{g}_{WP})$ is a K\"ahler manifold, where $\mathfrak{g}_{WP}$ denotes Weil-Petersson metric, it admits a canonical symplectic structure given by $\omega =\sum_{i=1}^{3g-3} d\ell_i \wedge d\theta_i$, where $P=(X,\xi)\in \Teich(S_g)$ is an arbitrary point and $(\ell_1,\theta_1,\ldots,\ell_{3g-3},\theta_{3g-3})$ are the Fenchel-Nielsen coordinates of $P$. This follows from the \textit{magic formula} due to Wolpert (\cite{FM,WPS}). Moreover, using the Fenchel-Nielsel coordinate system, it is easy to see that the symplectic manifold $(\Teich(S_g), \omega)$ is symplectomorphic to the Euclidean space $\mathbb{R}^{6g-6}$ with its standard symplectic structure.  

\noindent For any finite subgroup $H < \Mod(S_g)$, $\Fix(H) \approx \Teich(S_g/H) \approx \mathbb{R}^{2k}$ (\cite[Theorem 2]{H1}), which is a K\"ahler submanifold of $(\Teich(S_g),\mathfrak{g}_{WP})$, is also a symplectic submanifold of $(\Teich(S_g), \omega)$. Therefore, it is natural to ask if it is symplectomorphic to $\mathbb{R}^{2k}$ with the standard symplectic structure. The answer to the above question is obtained as an application of Theorem~\ref{thm:main} which provides an explicit embedding of $\Fix(H)$ as a K\"ahler submanifold of $(\Teich(S_g),\mathfrak{g}_{WP})$ when $H =\langle D \rangle$.
\begin{corollary} 
Let $H<\Mod(S_g)$ be a finite subgroup as in Corollaries \ref{cor:inj_global_bound} -\ref{cor:systole2}. Then $\Fix(H)$ is not symplectomorphic to the standard Euclidean space of the corresponding dimension.  
\end{corollary}
\begin{proof}
Using Theorems ~\ref{thm:main} and \ref{thm:systole1}, it follows that 
$$\Fix(\langle D \rangle) =\prod_{j=1}^k M_j,$$
where for some $j$, $M_j=(0,\ell_j) \times \mathbb{R}$, that is, $\Fix(\langle D \rangle)$ can be expressed as product of two-dimensional strips $M_j$ via the global system of Fenchel-Nielsen coordinates, where at least one of the strips is bounded by $\ell_j >0$ and the bound is determined by the irreducible components of $D.$ Moreover, in the same coordinate system, $(\Fix(\langle D \rangle),\omega|_{\Fix(\langle D \rangle)})$ can be realized as a symplectic submanifold with bounded symplectic width (also known as Gromov symplectic capacity) of the Euclidean space $\mathbb{R}^{2k}$ equipped with the standard symplectic structure (see Chapter 12, of \cite{McS} for a formal definition of symplectic width). The desired result is a direct consequence of M. Gromov's symplectic non-squeezing Theorem (see \cite{MG}) which shows that the radii of balls and cylinders are
stored as a symplectic invariant i.e. a ball cannot be embedded into a cylinder via a symplectic map unless the radius of the ball is less than or equal to the radius of the cylinder. As a consequence, in particular, a symplectic manifold of bounded symplectic width cannot be symplectomorphic to the standard Euclidean space of the same dimension as the latter has unbounded symplectic width. 

Finally, for an arbitrary $H= \langle D_1, \ldots, D_s \rangle$, the assertion follows from the fact that $\Fix(H) \approx \cap_{i=1}^s \Fix(\langle D_i \rangle)$, which would also have bounded symplectic width.
\end{proof}

\section{Recovering some well known results}
\label{sec:classical_results}
In this section, we apply our theory to provide alternative proofs to some known results that closely connect with the central theme of this paper. We begin with the following result due to Harvey~\cite{H1,HM1}, which follows as an
immediate consequence of Theorem \ref{thm:main}.
\begin{corollary}\label{cor:dim-branch-loci}
Let $D$ be a cyclic action of order $n$ on $S_g$ such that $\O_D$ has $c$ cone points.  Then $$\dim (\Fix ( \langle D \rangle )) =6g_0(D)+2c-6.$$
\end{corollary}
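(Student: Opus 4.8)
The plan is to treat this purely as a counting identity. The Main Theorem (Theorem~\ref{thm: main}) already expresses $\dim(\Fix(\langle D \rangle))$ in terms of the necklace parameters $g'$, $g''$, $k$, $f(T_{\N})$, and $m$, so it suffices to re-express both the quotient genus $g_0(D)$ and the cone-point count $c$ in these same parameters and then substitute. Concretely, I would first establish the two bookkeeping identities
\begin{equation*}
g_0(D) = (g' - g'') + m \qquad \text{and} \qquad c = k + 2 + 2f(T_{\N}) - 2m,
\end{equation*}
and then feed $g' - g'' = g_0(D) - m$ together with the expression for $c$ into the formula $\dim(\Fix(\langle D \rangle)) = 6(g'-g'')+2k+4f(T_{\N})+2m-2$ supplied by Theorem~\ref{thm: main}.

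For the cone-point count, I would track $c$ as the necklace $\N$ is assembled. Each of the $k$ beads is a spherical Type 1 action, and hence has exactly $3$ cone points, giving $3k$ to begin with. Assembling the linear $k$-chain $D_{T_{\N}}$ uses $k-1$ compatibilities of type $\lp \cdot, \cdot, (r,s)\rp$; by the standing convention, $f(T_{\N})$ of these are trivial $n$-compatibilities with $(r,s)=(0,0)$, which retain all cone points (Lemma~\ref{lem:comp_pair}), while each of the remaining $k-1-f(T_{\N})$ deletes one cone point from each factor, i.e.\ $2$ cone points. After the chain the count is therefore $3k - 2\bigl(k-1-f(T_{\N})\bigr) = k + 2 + 2f(T_{\N})$. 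Each of the $m$ self-compatibilities $\l\cdot,(r,s)\r$ then removes a further $2$ cone points (Lemma~\ref{lem:self_comp_ds}), while the toral additions and subtractions $\l\cdot,g'\r$ and $\overline{\l\cdot,g''\r}$ leave the cone-point data untouched (Definition~\ref{rem:triv_self_comp}). This yields $c = k + 2 + 2f(T_{\N}) - 2m$.

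For the quotient genus I would track $g_0$ analogously. Each bead is spherical, so contributes $g_0 = 0$, and the chain compatibilities $\lp D_1,D_2,(r,s)\rp$ merely add the quotient genera of their factors (Lemma~\ref{lem:comp_pair}), so the whole linear chain $D_{T_{\N}}$ still has $g_0 = 0$. Each self-compatibility raises $g_0$ by exactly $1$ (Lemma~\ref{lem:self_comp_ds}), contributing $m$ in total, and the toral operations contribute $+g'$ and $-g''$ respectively (Definition~\ref{rem:triv_self_comp}). Hence $g_0(D) = m + g' - g''$.

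Finally, substituting $g' - g'' = g_0(D) - m$ into the dimension formula gives
\begin{equation*}
6\bigl(g_0(D) - m\bigr) + 2k + 4f(T_{\N}) + 2m - 2 = 6g_0(D) + 2k + 4f(T_{\N}) - 4m - 2,
\end{equation*}
and substituting $2c = 2k + 4 + 4f(T_{\N}) - 4m$ shows this equals $6g_0(D) + 2c - 6$, as claimed. The only genuine subtlety, and the step to get exactly right, is the bookkeeping for how each compatibility type affects the two counts: trivial $n$-compatibilities increase $f(T_{\N})$ but neither delete cone points nor raise $g_0$, whereas self-compatibilities do both, and it is precisely this that makes the $m$-dependence cancel when $c$ is eliminated. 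Everything past that is arithmetic.
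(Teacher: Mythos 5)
Your proposal is correct and takes essentially the same approach as the paper: the paper's proof is a one-line application of Theorem~\ref{thm: main} via exactly the two identities you prove, namely $g_0(D)=g'-g''+m$ and $c=k+2f(T_{\N})-2m+2$, which it states as observations without derivation. Your explicit bookkeeping through the beads, chain compatibilities, self-compatibilities, and toral operations is a sound elaboration of those two observations, and your final substitution matches the paper's arithmetic.
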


\begin{proof}
This follows directly from Theorem \ref{thm:main} by observing that $g_0(D_{\N})=g'-g''+m$ and the number of cone points in $\O_{D_{\N}}=k+2f({T_{\N}})-2m+2.$ 
\end{proof}

\noindent Corollary~\ref{cor:dim-branch-loci} leads us to the following result due to Gilman~\cite{JG3} that characterizes the irreducibility of cyclic actions.

\begin{corollary}
\label{cor:gilman}
A cyclic action $D$ on $S_g$ is irreducible if, and only if $g_0(D) = 0$ and $\O_D$ is an orbifold with three cone points. 
\end{corollary}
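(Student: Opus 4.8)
\textbf{Proof proposal for Corollary~\ref{cor:gilman}.}

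The plan is to read off the characterization directly from the dimension formula in Corollary~\ref{cor:dim-branch-loci}, using the topological fact (recorded in Remark~\ref{rem:fixD}) that an action $D$ is irreducible precisely when $\dim(\Fix(\langle D \rangle)) = 0$. Recall that Remark~\ref{rem:fixD} establishes both directions of this equivalence: if $D$ is reducible, then extending a maximal reduction system $\C$ to a pants decomposition gives $\dim(\Fix(\langle D \rangle)) \geq 2|\C| > 0$; and conversely, a positive-dimensional fixed locus forces the existence of a reduction system, hence reducibility. Thus irreducibility is equivalent to the vanishing of the branch locus dimension.

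First I would invoke Corollary~\ref{cor:dim-branch-loci} to write
$$\dim(\Fix(\langle D \rangle)) = 6g_0(D) + 2c - 6,$$
where $c$ is the number of cone points of $\mathcal{O}_D$. By the equivalence above, $D$ is irreducible if and only if $6g_0(D) + 2c - 6 = 0$, i.e.\ $3g_0(D) + c = 3$. I would then analyze the nonnegative integer solutions of this equation. Since $g_0(D) \geq 0$ and $c \geq 0$, if $g_0(D) \geq 1$ then $c = 3 - 3g_0(D) \leq 0$, which forces $g_0(D) = 1$ and $c = 0$; but a closed action with no cone points on a genus-one quotient is a free rotation, which is reducible (its orbit of a nonseparating curve yields a reduction system). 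Hence the only admissible solution realizing an irreducible action is $g_0(D) = 0$ and $c = 3$, which is exactly the condition that $\mathcal{O}_D$ has genus zero with three cone points.

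The remaining point is to confirm that an action with $g_0(D) = 0$ and $c = 3$ is genuinely irreducible rather than merely having a zero-dimensional fixed set as a formal artifact. Such an action is by definition a spherical Type 1 action (possibly after checking that one cone point has order $n$, which is automatic since $g_0 = 0$ forces $\lcm(n_1, n_2, n_3) = n$ by Definition~\ref{defn:data_set}(iv), and a signature with three cone points on a sphere necessarily has a cone point of order $n$), and its irreducibility was already established in the Corollary following Proposition~\ref{thm:irr_type1}. I expect the main obstacle to be the careful bookkeeping ruling out spurious solutions of $3g_0(D) + c = 3$ and verifying that the $(g_0, c) = (0,3)$ case indeed corresponds to the quasiplatonic / spherical Type 1 actions treated earlier; the dimension computation itself is immediate once Corollary~\ref{cor:dim-branch-loci} and Remark~\ref{rem:fixD} are in hand.
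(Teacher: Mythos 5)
Your core argument coincides with the paper's: both directions are read off from the equivalence ``$D$ irreducible $\iff \dim(\Fix(\langle D \rangle)) = 0$'' of Remark~\ref{rem:fixD} combined with the dimension formula of Corollary~\ref{cor:dim-branch-loci}. The only structural difference is minor: for an irreducible $D$ the paper rules out $g_0(D) > 0$ by observing that such an action has a nontrivial permutation component (hence an invariant multicurve), whereas you enumerate the nonnegative solutions of $3g_0(D) + c = 3$ and discard $(g_0,c) = (1,0)$ on the grounds that a free action with torus quotient is reducible. Both work; in fact the case $(1,0)$ is vacuous for $g \geq 2$, since the Riemann--Hurwitz equation with $g_0 = 1$ and no cone points forces $g = 1$.

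One claim in your final paragraph is, however, false: $g_0(D) = 0$ together with $c = 3$ does \emph{not} force a cone point of order $n$. Condition (iv) of Definition~\ref{defn:data_set} only gives $\lcm(n_i,n_j) = n$ for each pair, which does not imply $n_i = n$ for some $i$. For instance, $D = (60,0;(1,12),(1,20),(13,15))$ is a valid data set (one checks $5 \cdot 1 + 3 \cdot 1 + 4 \cdot 13 = 60 \equiv 0 \pmod{60}$, and all pairwise lcm's equal $60$), giving a $C_{60}$-action on $S_{25}$ whose quotient orbifold has three cone points, none of order $60$; it is quasiplatonic but not Type 1, so you cannot reduce the $(0,3)$ case to the irreducibility corollary following Proposition~\ref{thm:irr_type1}. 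Fortunately this verification step is redundant: irreducibility of any action with $\dim(\Fix(\langle D \rangle)) = 0$ is exactly what the converse direction of Remark~\ref{rem:fixD} asserts, and that is precisely how the paper concludes its forward implication. Deleting your final paragraph leaves a complete and correct proof.
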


\begin{proof}
Consider an action $D$ on $S_g$ of the form $D = (n,0;(c_1,n_1),(c_2,n_2),(c_3,n_3)),$ and let $\N$ be any necklace with $k$ beads such that $D_{\N} = D$. It follows from Corollary~\ref{cor:dim-branch-loci} that $\dim(\Fix(\langle D \rangle)) = 0$. Therefore, by Remark~\ref{rem:fixD}, we conclude that $D$ is irreducible.

Conversely, suppose that $D$ is irreducible. Then $g_0(D) = 0$, as otherwise, $D$ would have a nontrivial permutation component. By Remark~\ref{rem:fixD}, it follows $\dim(\Fix(\langle D \rangle)) = 0$, and so Corollary~\ref{cor:dim-branch-loci} would imply that $\O_D$ has exactly 3 cone points, and the assertion follows.
\end{proof}

\subsection{Nielsen realization theorem for cyclic groups} In this subsection, we provide a purely topological proof of the Nielsen realization theorem for cyclic groups motivated by some of the key ideas developed in~\cite{PRS} and this paper. Though the general version of this result (for arbitrary finite subgroups of $\Mod(S_g)$) was proved by Kerckhoff~\cite{SK1}, Fenchel~\cite{WF2,WF1} is credited with giving the first complete proof for cyclic groups, and more generally solvable groups. (The concluding discussion in Nielsen's original paper~\cite{JN1}, settling the cyclic case, was later shown by Zieschang~\cite{HZ1,HZ} to be partly incorrect.) Our proof of this result differs from approaches of Nielsen and Fenchel, as we will use a characterization of irreducible finite-order mapping classes through the orbits of nonseparating curves induced by their actions. A formal statement of the result is as follows: 
\begin{theorem}[Nielsen-Fenchel]
\label{thm:NK}
Let $h \in \Mod(S_g)$ be of order $n$. Then $h$ has a representative $\tilde{h} \in \text{Homeo}^+(S_g)$ such that $\tilde{h}^n = 1$. 
\end{theorem}

\noindent We start by giving a characterization of irreducible Type 1 actions in terms of their curve orbits. We will use $i(a,b)$ (resp. $\tilde{i}(a,b)$) to denote the geometric (resp. algebraic) intersection number of two simple closed curve $a$ and $b$ in $S_g$. 

\begin{proposition}\label{prop:irr-type1_NK}
Let $h \in \Mod(S_g)$ be an irreducible mapping class of order $n$. Then the following statements are equivalent. 
\begin{enumerate}[(i)]
\item There exists a (oriented) nonseparating curve $c$ in $S_g$ with $i(c,h(c)) = \hat{i}(c,h(c))=1$ such that the $\langle h \rangle$-orbit of $c$ is of size $n$.
\item There exists a representative $\tilde{h} \in \text{Homeo}^+(S_g)$ of $h$ that is realized as an order-$n$ rotation of a polygon of type $\p_D$ by $\theta_D$, as in Proposition~\ref{thm:irr_type1}. Equivalently, $\tilde{h}$ generates a spherical Type 1 action on $S_g$. 
\end{enumerate}
\end{proposition}
\begin{proof}
Suppose we assume that (i) holds. Since $i(c,h(c))=1$, we have that $i(h^i(c),h^{i+1}(c)) =1$, for $0 \leq i \leq n-1$, and so it follows that the isotopy class of the curve $\gamma := c*h(c)*\ldots *h^{n-1}(c)$ is represented by a simple closed curve. Moreover, the representation of $\gamma$ together with the fact that $i(h^i(c),h^{i+1}(c)) =1$ would now imply that $h$ preserves the isotopy class of $\gamma$. So, we may assume up to isotopy that $h(\gamma) = \gamma$. As $h$ is irreducible, it follows that $\gamma$ is contractible, and thus $\gamma$ bounds a disk $\p$ in $S_g$. Furthermore, since $\hat{i}(h^i(c),h^{i+1}(c)) =1$,  the orientations on the curves $h^i(c)$ yield a side-pairing on $\partial \p$. Therefore, $\bar{\p}$ is a topological polygon with a side-pairing, which upon identification yields $S_g$. Since $h$ cyclically permutes the curves $\{c,h(c),\ldots,h^{n-1}(c)\}$ in $\partial \p$, it induces a rotation $\theta$ of $\bar{\p}$ of order $n$. Viewing $\bar{\p}$ as a hyperbolic polygon, the irreducibility of $h$ together with Proposition \ref{thm:irr_type1} implies that $\bar{\p}$ is a polygon of type $\p_D$, which $h$ rotates by $\theta_D(= \theta)$. 

Conversely, suppose that $(ii)$ holds. Then the structure of the polygon $\p_D$ guarantees the existence of an orbit $\{c,h(c),\ldots,h^{n-1}(c)\}$ of nonseparating curves (on $\partial \p_D$) as desired.
\end{proof}

 Let $h \in \Mod(S_g)$ be an irreducible finite-order mapping class that is not of Type 1. In~\cite[Lemma 2.23]{PRS}, we had provided a combinatorial recipe for the geometric realization $h$, which involved the attachment of a permutation component followed by a decomposition into Type 1 irreducibles. Taking inspiration from this construction, in the following remark, we sketch a rather technical procedure for realizing such an $h$ topologically.

\begin{remark}
\label{rem:irr_type2}
Let $h \in \Mod(S_g)$ be an irreducible order-$n$ mapping class that is not of Type 1. Assuming, up to isotopy, that $h$ has an orbit of size $n$, we add an $1$-permutation component to $h$ (as in Definition~\ref{rem:triv_self_comp}) along this orbit, to obtain an action $h' = \l h, 1 \r$ on $S_{g + n}$. For $1 \leq i \leq n$, let $a_i$ and $b_i$ denote the longitude and meridional curves on $i^{th}$ copy of $S_{1,0}$ that was pasted to $S_{g}$ during the construction of $h'$. Further, we assume that the for each $i$, the $a_i$ and the $b_i$ are oriented such that $\hat{i}(a_i,b_i) = 1$.  For $1 \leq i \leq n$, let $\beta_i$ be the curve along with the $i^{th}$ copy of $S_{1,0}$ was attached while constructing $h'$. Then the isotopy class of $\beta = \beta_1 \ast \beta_2 \ast \ldots \beta_n$ is represented by a nontrivial curve in $S_g$, for otherwise $\beta$ would bound a disk $\p$ in $S_g$ which a representative $\tilde{h}$ of $h$ rotates by $2\pi/n$, thereby inducing a fixed point in the center of $\p$. This would contradict our assumption that $h$ is not of Type 1. Further, as $h$ irreducible, we have $h(\beta) \neq \beta$, and as $h(\beta_i) = \beta_{i+1}$, it follows that $h(\beta)$ and $\beta$ are homologous. Thus, $h(\beta)$ and $\beta$ cobound a subsurface of $S_g$, and so we have that $i(\beta, h(\beta)) = 0$. 

Now we define $c_i := a_i * b_{i+1}^{-1}$, for $ 1 \leq i \leq n$. Then we see that $i(c_i,c_{i+1})= \hat{i}(c_i,c_{i+1}) = 1$. Further, by definition, it follows that $h'(c_i)= c_{i+1}$, for all $i$. So, $\{c_1,\ldots,c_n\}$ is the curve orbit of $c_1$ under the $\langle h' \rangle$-action such that $i(c_1,f(c_1)) = \hat{i}(c_1,f(c_1)) = 1$. Further,  $h'$ preserves the isotopy class of $c := c_1 \ast c_2 \ast \ldots \ast c_n$ in $S_{g+n}$, which is represented by a simple closed curve. Thus, $h'$ may be isotoped to preserve $c$, and hence a closed neighborhood $N$ of $c$, which is a subsurface $\Sigma$ of $S_{g+n}$. Thus, $h'$ induces an order-$n$ map $h_{\Sigma}$ on $\Sigma$, and so let $h_{\widehat{\Sigma}}$ be the map induced by $h_{\Sigma}$ on the surface $\widehat{\Sigma}$ obtained by capping off the boundary components of $\Sigma$. Since $c$ is contractible in $\widehat{\Sigma}$, by the ideas in Proposition~\ref{prop:irr-type1_NK}, it follows that $h_{\widehat{\Sigma}}$ defines an action on  $\widehat{\Sigma}$ that is realized as a rotation of a polygon with a side-pairing defined by the oriented curves $c_i$. Moreover, $h_{\widehat{\Sigma}}$ must have finitely many $\alpha$-compatibilities (defined along the boundary components of $\Sigma$) with the action $h_{\widehat{\Sigma'}}$ induced by $h'$ on the surface $\widehat{\Sigma}'$ obtained by capping off $\overline{S_{g + n} \setminus \Sigma}$. By removing maximal reduction systems for $h_{\widehat{\Sigma}}$ and $h_{\widehat{\Sigma'}}$ in $\widehat{\Sigma}$ and $\widehat{\Sigma'}$, respectively, and capping (and repeating the process above, if required), we can further decompose $h_{\widehat{\Sigma'}}$ and $h_{\widehat{\Sigma'}}$ into Type 1 irreducibles. 

Finally, by Proposition~\ref{prop:irr-type1_NK}, each of these components has a order $n$ representative that is realized a rotation of a polygon (of type $\p_D$). So, we paste these representatives together across the respective compatibilities to obtain an order-$n$ representative of $h'$. Finally, we delete a $1$-permutation component from $h'$ to obtain an order-$n$ representative of $h$. 
\end{remark}

 By a \textit{multicurve} $\C \subset S_g$, we mean a finite collection of disjoint nonisotopic esential simple closed curves in $S_g$. A multicurve $\C$ in $S_g$ is said to be \textit{nonseparating}, if $S_g \setminus \C$ is connected, and is said to be \textit{separating}, otherwise. Two multicurves $\C$ and $\C'$ in $S_g$ are said to be \textit{mutually disjoint} if there exists no pair of curves $c, c'$ with $c \in \C$ and $c' \in \C'$ such that $i(c,c') > 0$. 

\begin{definition}
Let $h \in \Mod(S_g)$ be of order $n$. Then a multicurve $\C \subset S_g$  is said to form an \textit{essential curve orbit induced by $h$} if:
\begin{enumerate}[(i)]
\item $\C$ is an orbit of some curve $c$ under the action of $\langle h \rangle$, and 
\item $c$ is nonseparating, if $|\C|> 1$, and $c$ is separating, otherwise. 
\end{enumerate}
An essential orbit is said to be of \textit{full size}, if $|\C| =n$.
\end{definition}

\noindent Before we sketch a proof Theorem~\ref{thm:NK}, we will state a lemma (without proof) that gives characterization rotational mapping classes based on the essential curve orbits induced by their actions. 

\begin{lemma}
\label{lem:rot_condn}
Let $h \in \Mod(S_g)$ be of order $n$. Then $h$ is a rotational mapping class if, and only if, $h$ has a maximal reduction system $\C$ such that one of the following holds.
\begin{enumerate}[(i)]
\item There exists $k$ mutually disjoint full sized nonseparating essential curve orbits $\C_i$, for $1 \leq i \leq k$, induced by $h$ such that $k = g/n$, and $\C = \cup_{i=1}^k \C_i$. 
\item There exists $k$ mutually disjoint full sized nonseparating essential curve orbits $\C_i$, for $1 \leq i \leq k$, induced by $h$ such that $k = (g-1)/n$, and there exists a nonseparating curve $c$ in $S_g$ disjoint from the curves in each $\C_i$ such that $\C = \cup_{i=1}^k \C_i \cup \{c\}$. 
\end{enumerate}
\end{lemma}

\noindent We conclude this paper by sketching a proof of the Nielsen realization theorem for the cyclic case.

\begin{proof}[Proof of Theorem~\ref{thm:NK}] 
Let $h\in \Mod(S_g)$ be of order $n$. If $h$ is irreducible, then by Proposition~\ref{prop:irr-type1_NK} and Remark~\ref{rem:irr_type2}, we can obtain a representative $\tilde{h}$ of $h$, of order $n$, as required. Moreover, if $h$ has a maximal reduction system $\C$ that satisfies the condition (i) of Lemma~\ref{lem:rot_condn}. Then by capping off $\overline{S_g \setminus \C}$, $h$ reduces to a map on the sphere $S_0$, which is isotopic to a rotation $R$. Note that the action of $R$ on $S_0$ has two marked orbits of size $n$ on $S_0$ corresponding to each essential full-sized orbit in $\C$ (that was removed). We may inductively paste $k$ $1$-permutations to $R$ along these marked orbits in $S_0$ (corresponding to the full-sized essential orbits in $\C$ we had removed) to obtain an $\tilde{h} \in \text{Homeo}^+(S_g)$, as desired. If $h$ has a maximal reduction system $\C$ that satisfies condition (ii) of Lemma~\ref{lem:rot_condn}, then by a similar argument as above, we may obtain $\tilde{h}$. 

Suppose that $h$ is a reducible non-rotational mapping class. Choose a collection $\{\C_1, \ldots,\C_k\}$ of mutually disjoint essential orbits induced by $h$ such that $\C = \cup_{i=1}^k \C_i$ forms a maximal reduction system for $h$. Let $h'$ be the map induced by $h$ on the surface $S'$ obtained by capping off the components of $\overline{S_g \setminus \C}$. As before, we note that the removal each $\C_i \subset \C$ induces two marked orbits of size $|\C_i|$ in $S'$. By our assumption, it now follows that each component of $S'$ is of genus greater than or equal to $1$, and furthermore, $h'$ induces an irreducible mapping class on each component of $S'$. By Proposition~\ref{prop:irr-type1_NK} and Remark~\ref{rem:irr_type2}, we can obtain an order-$n$ representative of each component of $h'$. Now we paste together these representatives  across $r$-compatibilities along the marked orbits (corresponding to an essential orbits in the collection $\{\C_1, \ldots,\C_k\}$ we had removed), to obtain a representative $\tilde{h}$ of the mapping class $h$ such that $\tilde{h}^n =1$.
\end{proof}

\section*{Acknowledgments} The first and third authors were supported by a joint SERB-EMR grant instituted by the Government of India.
 
\bibliographystyle{plain}
\bibliography{cyclic_realizations}

\end{document}